\documentclass[12pt,a4paper]{amsart}
\usepackage[english]{babel}
\usepackage{amsmath,amssymb,amsthm,latexsym,MnSymbol}
\usepackage[utf8]{inputenc}
\usepackage{tikz-cd}
\usepackage[colorlinks=true]{hyperref}
\usepackage[a4paper, portrait]{geometry}

\geometry{margin=2.5cm}

\newcommand{\Z}{\mathbf{Z}}

\newcommand{\Q}{\mathbf{Q}}
\newcommand{\Qb}{\overline{\Q}}

\newcommand{\CC}{\mathbf{C}}
\newcommand{\h}{\mathcal{H}}
\newcommand{\A}{\mathbf{A}}

\newcommand{\T}{\mathbf{T}}

\newcommand{\et}{\textrm{ét}}

\DeclareMathOperator{\End}{End}
\DeclareMathOperator{\Gal}{Gal}
\DeclareMathOperator{\GL}{GL}
\DeclareMathOperator{\Hom}{Hom}

\DeclareMathOperator{\Spec}{Spec}

\newtheorem{thm}{Theorem}
\newtheorem*{thm*}{Theorem}
\newtheorem{lem}{Lemma}
\newtheorem{pro}[lem]{Proposition}

\newtheorem*{cor*}{Corollary}

\theoremstyle{definition}
\newtheorem{definition}[lem]{Definition}

\theoremstyle{remark}
\newtheorem{remark}{Remark}

\newtheorem*{remarks*}{Remarks}

\newtheorem*{questions}{Questions}

\begin{document}

\title[Modularity of endomorphism algebras]{On the modularity of endomorphism algebras}

\author[F. Brunault]{François Brunault}

\date{\today}

\address{ÉNS Lyon, Unité de mathématiques pures et appliquées, 46 allée d'Italie, 69007 Lyon, France}

\email{francois.brunault@ens-lyon.fr}
\urladdr{http://perso.ens-lyon.fr/francois.brunault}

\subjclass[2010]{Primary 11F41; Secondary 11F25, 11F70, 11F80, 14G32}
\keywords{Modular curves, Hecke correspondences, endomorphism algebras, automorphic representations, Galois representations}

\begin{abstract}
We use the adelic language to show that any homomorphism between Jacobians of modular curves arises from a linear combination of Hecke modular correspondences. The proof is based on a study of the actions of $\GL_2$ and Galois on the étale cohomology of the tower of modular curves. We also make this result explicit for Ribet's twisting operators on modular abelian varieties.
\end{abstract}

\maketitle

It is natural to ask whether the endomorphism algebra of the Jacobian of a modular curve is generated by the Hecke operators. Ribet showed in \cite{ribet:semistable} that if $N$ is prime, the algebra $(\End J_0(N)) \otimes \Q$ is generated by the Hecke operators $T_n$ with $n$ prime to $N$, answering positively a question of Shimura. Mazur \cite{mazur:eisenstein} subsequently showed an integral refinement of Ribet's result, namely that the algebra $\End J_0(N)$ is generated by the Hecke operators $T_p$ ($p$ prime, $p \neq N$) and by the Atkin-Lehner involution $w_N$.

For general $N$, the naive generalization of Ribet's result does not hold, since the Hecke operators generate a commutative subalgebra, while $\End J_0(N)$ is not commutative in general. The reason behind this is the existence of old modular forms, and we have to account for them if we want to find generators of the endomorphism algebra. In this direction, Kani \cite{kani} showed that if $\Gamma$ is a congruence subgroup such that $\Gamma_1(N) \subset \Gamma \subset \Gamma_0(N)$ with $N$ arbitrary, and $J_\Gamma$ is the Jacobian of the modular curve associated to $\Gamma$, then the algebra $\End(J_\Gamma) \otimes \Q$ is generated by the Hecke operators together with explicit degeneracy operators.

The purpose of this note is to develop an alternative approach to these questions using the adelic language. We show that after tensoring with $\Q$, any homomorphism between Jacobians of modular curves arises from a finite linear combination of Hecke modular correspondences. The cost of our abstract approach is that our results are less explicit in nature: we don't give explicit generators. On the other hand, our results are more general in that we consider homomorphisms instead of endomorphisms (Theorem \ref{thm1}), and in that we allow for homomorphisms defined over abelian extensions of $\Q$ (Theorem \ref{thm2}). I hope to convince the reader that the adelic language provides a convenient point of view for studying these questions.

Ribet showed in \cite{ribet:twists} that the endomorphism algebra of a modular abelian variety $A_f$ is generated over the Hecke field of $f$ by a finite set of endomorphisms coming from inner twists of $f$. In the last section of this paper, we explain how to write these endomorphisms in terms of Hecke correspondences, thus giving some substance to Theorem \ref{thm2}.

I thank Gabriel Dospinescu, Filippo A. E. Nuccio and Vincent Pilloni for interesting discussions related to this paper, and Eknath Ghate for useful advice.

\section{Statement of the main result}

Let $\A_f$ denote the ring of finite adèles of $\Q$, and let $G=\GL_2(\A_f)$. For any compact open subgroup $K$ of $G$, let $M_K$ denote the open modular curve over $\Q$ associated to $K$, and let $\overline{M}_K$ denote the compactification of $M_K$. If $M_K$ is geometrically connected, we denote by $J_K$ the Jacobian variety of $\overline{M}_K$.

Let $K,K'$ be compact open subgroups of $G$. We denote by $\tilde{\T}_{K,K'} = \Z[K \backslash G / K']$ the space of $\Z$-valued functions on $K\backslash G/K'$ with finite support. Assume that $M_K$ and $M_{K'}$ are geometrically connected. We then have a canonical map $\rho_J : \tilde{\T}_{K,K'} \to \Hom(J_K,J_{K'})$ (see Section \ref{adelic}).

\begin{thm}\label{thm1}
Let $K,K'$ be compact open subgroups of $G$ such that the modular curves $M_K$ and $M_{K'}$ are geometrically connected. Then $\rho_J(\tilde{\T}_{K,K'}) \otimes \Q = \Hom(J_K,J_{K'}) \otimes \Q$.
\end{thm}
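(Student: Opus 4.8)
The strategy is to reduce the geometric statement about Jacobians to a statement about étale cohomology, where the adelic $\GL_2$-action can be analyzed via the theory of automorphic representations. Concretely, for a smooth projective curve $X/\Q$ one has $\Hom(J_K, J_{K'}) \otimes \Q \hookrightarrow \Hom_{\Gal(\Qb/\Q)}(H^1_{\et}(\overline{M}_{K',\Qb}, \Q_\ell), H^1_{\et}(\overline{M}_{K,\Qb}, \Q_\ell))$, and by Faltings' proof of the Tate conjecture for abelian varieties this inclusion is an equality. So it suffices to show that every Galois-equivariant map between these two $H^1$'s is obtained from $\tilde{\T}_{K,K'} \otimes \Q_\ell$ acting through Hecke correspondences. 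I would pass to the limit: set $V = \varinjlim_K H^1_{\et}(\overline{M}_{K,\Qb}, \Q_\ell)$, a smooth representation of $G = \GL_2(\A_f)$ with a commuting Galois action, so that $H^1_{\et}(\overline{M}_{K,\Qb}, \Q_\ell) = V^K$. The Hecke action of $\tilde{\T}_{K,K'}$ on cohomology is then just the natural map induced by $G$-equivariance, and the claim becomes: the natural map $e_K (\Q_\ell[G]) e_{K'} \to \Hom_{\Gal}(V^{K'}, V^K)$ is surjective, where $e_K, e_{K'}$ are the idempotents attached to $K, K'$.

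The next step is to decompose $V$ using the theory of automorphic forms: up to the contribution of the boundary / Eisenstein part (which I would handle separately, since the cuspidal and Eisenstein parts of $H^1$ of the modular tower are Hecke- and Galois-stable), one has an isotypic decomposition $V_{\mathrm{cusp}} \cong \bigoplus_{\pi_f} \pi_f \otimes \sigma_{\pi_f}$, where $\pi_f$ runs over the finite parts of cuspidal automorphic representations of $\GL_2/\Q$ of weight $2$, each $\pi_f$ is an irreducible smooth representation of $G$ with multiplicity one (strong multiplicity one / newform theory), and $\sigma_{\pi_f}$ is the associated $2$-dimensional $\ell$-adic Galois representation (Deligne, Eichler–Shimura). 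By Schur's lemma over the coefficient field and the fact that distinct $\pi_f$ have distinct Galois representations up to the action of $\Gal(\Qb/\Q)$ on Hecke eigenvalues — more precisely, a Galois-equivariant map $V^{K'} \to V^K$ must respect the decomposition into Galois-isotypic pieces, and within each piece the $\GL_2(\A_f)$-isotypic structure is rigid — one reduces to showing surjectivity of $e_K \Q_\ell[G] e_{K'} \to \Hom_{\Q_\ell}(\pi_f^{K'}, \pi_f^K)$ for each individual $\pi_f$. This last surjectivity is a purely representation-theoretic fact about smooth admissible representations: the image of the Hecke algebra $e_K \Q_\ell[G] e_{K'}$ in $\Hom(\pi_f^{K'}, \pi_f^K)$ is all of it, because $\pi_f$ is irreducible and $\pi_f^K$, $\pi_f^{K'}$ are finite-dimensional — this is the Jacquet–Langlands-style statement that $\pi_f^K$ is a simple module over the Hecke algebra $e_K \Q_\ell[G] e_K$ and the bimodule $e_K \Q_\ell[G] e_{K'}$ surjects onto all intertwiners.

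The subtle points, and where I expect the real work to be, are two. First, the coefficient-field bookkeeping: the automorphic representations and their Galois representations are defined over number fields (the Hecke fields), conjugate Hecke eigensystems are permuted by $\Gal(\Qb/\Q)$, and I must ensure the decomposition and the multiplicity-one argument are carried out over $\Q_\ell$ (or $\overline{\Q}_\ell$) correctly, taking Galois orbits into account — this is exactly the point that makes the statement about $\Q$-Hom's rather than $\overline{\Q}$-Hom's, and it is where an argument purely over $\overline{\Q}_\ell$ would need to be descended. Second, the Eisenstein/boundary contribution: $H^1_{\et}(\overline{M}_K, \Q_\ell)$ differs from $H^1_{\et}(M_K, \Q_\ell)$, and the weight-$2$ Eisenstein part contributes to $H^1$ of the open curve but the cuspidal compactification kills the relevant piece — I need to check that on $\overline{M}_K$ the full $H^1$ is the "motivic" (cuspidal, or parabolic) part, so that the clean automorphic decomposition applies, and that the Hecke correspondences in $\tilde{\T}_{K,K'}$ act compatibly on the abelian varieties and on this cohomology. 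Granting the detailed analysis of the $\GL_2(\A_f) \times \Gal$-module structure of the étale cohomology of the modular tower promised in the later sections, the argument above is the skeleton; the main obstacle is making the passage from $\overline{\Q}_\ell$-intertwiners back down to $\Q$-homomorphisms of abelian varieties via Faltings, keeping track of Galois orbits of newforms throughout.
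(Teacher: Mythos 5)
Your skeleton is essentially the paper's: decompose the cohomology of the tower as $\bigoplus_\pi \Omega(\pi) \otimes V_\pi$, use irreducibility and pairwise non-isomorphism of the attached $2$-dimensional Galois representations so that Galois-equivariant maps respect this decomposition, and reduce to surjectivity of the Hecke bimodule onto the intertwiners of each $\pi$ — these are exactly Lemmas \ref{decomp HK}, \ref{rep Vpi} and \ref{iso TKK'}. The one genuine divergence is your appeal to Faltings: the paper never uses the Tate conjecture, only the classical \emph{injectivity} of $\Hom(J_K,J_{K'}) \otimes \Qb_\ell \to \Hom_{\Gamma_\Q}(H^K,H^{K'})$. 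Once the dimension count coming from Lemmas \ref{iso TKK'}, \ref{decomp HK}, \ref{rep Vpi} shows that the composite $\T_{K,K'} \otimes \Qb_\ell \to \Hom_{\Gamma_\Q}(H^K,H^{K'})$ is already surjective, the intermediate space $\Hom(J_K,J_{K'}) \otimes \Qb_\ell$ is squeezed between two equal spaces, so Faltings is not needed (and the Tate-style equality drops out as a byproduct); your route is correct but strictly heavier at this step. Your first ``subtle point'' (descent from $\Qb_\ell$ to $\Q$, Galois orbits of newforms) is in fact a non-issue: $\rho_J(\tilde{\T}_{K,K'}) \otimes \Q$ and $\Hom(J_K,J_{K'}) \otimes \Q$ are $\Q$-subspaces of a finite-dimensional $\Q$-vector space, and equality after $\otimes_\Q \Qb_\ell$ forces equality over $\Q$, which is why the paper can work with $\Qb$- and $\Qb_\ell$-coefficients throughout; your second point disappears because $H^1$ of the compactified curve is purely cuspidal, as you guessed. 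Finally, the bimodule surjectivity of $\T_{K,K'} \otimes \Qb_\ell$ onto $\prod_\pi \Hom(\Omega(\pi)^{K},\Omega(\pi)^{K'})$, which you quote as a known ``Jacquet--Langlands-style'' fact, is precisely where the paper does its representation-theoretic work (Lemmas \ref{iso TK} and \ref{iso TKK'}): one needs \emph{joint} surjectivity over all $\pi$ simultaneously (Burnside plus pairwise non-isomorphic simple modules), together with the non-vanishing of the map for each individual $\pi$, proved by reducing to the cases $K \subset K'$ and $K' \subset K$ and using the inclusion and trace elements — this deserves an argument rather than a citation, though it is a standard one.
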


\begin{remark}\label{rmk1}
Let $X$ be a smooth projective curve, and let $J$ be the Jacobian of $X$. It is known that every endomorphism of $J$ arises from an effective linear combination of correspondences on $X$. In the case $X$ is a modular curve, a general correspondence on $X$ could arise from a cover associated to a noncongruence subgroup. Our result says that congruences subgroups are enough to generate the endomorphism algebra.
\end{remark}

\begin{remark}\label{rmk2}
According to the Langlands philosophy, the Galois representations associated to algebraic varieties are expected to be automorphic. In fact, this conjectural correspondence should be functorial: not only the Galois representations, but also the morphisms between them should have an automorphic explanation. Theorem \ref{thm1} can be seen as a very simple case of this principle.
\end{remark}

\section{Modular curves in the adelic setting}\label{adelic}

Let $K$ be a compact open subgroup of $G=\GL_2(\A_f)$. The complex points of the modular curve $M_K$ are given by
\begin{equation*}
M_K(\CC) = \GL_2^+(\Q) \backslash (\h \times G) / K
\end{equation*}
where $\GL_2^+(\Q)$ acts on $\h \times G$ by $\gamma \cdot (\tau,g) = (\gamma(\tau),\gamma g)$, and $K$ acts on $G$ by right multiplication.

The set of connected components of $M_K(\CC)$ is in bijection with $\hat{\Z}^\times/\det(K)$. More precisely, let $\chi : \Gal(\Q^{\mathrm{ab}}/\Q) \xrightarrow{\cong} \hat{\Z}^\times$ denote the cyclotomic character, and let $F$ be the finite abelian extension of $\Q$ associated to $\chi^{-1}(\det(K))$. Then the structural morphism $M_K \to \Spec \Q$ factors through $\Spec F$, and the curve $M_K$ over $\Spec F$ is geometrically connected. We refer to $F$ as the \emph{base field of $M_K$}.

Let $K,K'$ be compact open subgroups of $G$, and let $g \in G$. We define a correspondence $\tilde{T}(g)$ between $M_{K}$ and $M_{K'}$ by the following diagram
\begin{equation*}
\begin{tikzcd}
& M_{K \cap g K' g^{-1}} \arrow{dl}[swap]{\alpha} \arrow{dr}{\alpha'} \\
M_K \arrow[dashed]{rr}{\tilde{T}(g)} & & M_{K'}
\end{tikzcd}
\end{equation*}
given on the complex points by $\alpha([\tau,h])=[\tau,h]$ and $\alpha'([\tau,h]) = [\tau,hg]$. The correspondence $\tilde{T}(g)$ extends to the compactifications and induces a map
\begin{equation*}
T(g) = \alpha'_* \circ \alpha^* : \Omega^1(\overline{M}_K) \to \Omega^1(\overline{M}_{K'}).
\end{equation*}
The map $T(g)$ depends only on the double coset $KgK'$, and there is a canonical map
\begin{equation*}
\rho_\Omega : \tilde{\T}_{K,K'} \to \Hom_\Q(\Omega^1(\overline{M}_K),\Omega^1(\overline{M}_{K'}))
\end{equation*}
sending the characteristic function of $KgK'$ to $T(g)$. We let $\T_{K,K'} = \rho_\Omega(\tilde{\T}_{K,K'})$.

Assume that $M_K$ and $M_{K'}$ are geometrically connected. For any $g \in G$, we define similarly $T(g) = \alpha'_* \circ \alpha^* : J_K \to J_{K'}$. Note that the homomorphism $T(g)$ is a priori defined over the base field of $M_{K \cap gK'g^{-1}}$, but its differential at the origin maps the tangent space $\Omega^1(\overline{M}_K)$ into $\Omega^1(\overline{M}_{K'})$, hence it is defined over $\Q$. We therefore get a map $\rho_J : \tilde{\T}_{K,K'} \to \Hom(J_K,J_{K'})$. Since $\Hom(J_K,J_{K'})$ acts faithfully on the tangent spaces, the map $\rho_J$ factors through $\T_{K,K'}$. Summing up, we have a commutative diagram
\begin{equation*}
\begin{tikzcd}
\tilde{\T}_{K,K'} \arrow{r} \arrow[bend left=15]{rrr}{\rho_\Omega} & \T_{K,K'} \arrow{r}{\rho_J} & \Hom(J_K,J_{K'}) \arrow{r}{\lambda} & \Hom_\Q(\Omega^1(\overline{M}_K),\Omega^1(\overline{M}_{K'}))
\end{tikzcd}
\end{equation*}
where $\lambda$ denotes the differential at the origin.

In the case $K=K'$, we put $\tilde{\T}_K = \tilde{\T}_{K,K}$ and $\T_K = \T_{K,K}$. The convolution product endows $\tilde{\T}_{K}$ with the structure of a (unitary) ring, and $\tilde{\T}_{K,K'}$ is a $(\tilde{\T}_{K},\tilde{\T}_{K'})$-bimodule. Note that $\T_{K,K'}$ is a $(\T_K,\T_{K'})$-bimodule.

\section{Proof of Theorem \ref{thm1}}

Define
\begin{equation*}
\Omega = \varinjlim_K \Omega^1(\overline{M}_K) \otimes \Qb,
\end{equation*}
where the direct limit is taken with respect to the pull-back maps. The space $\Omega$ is endowed with an action of $G$ and the subspace $\Omega^K$ of $K$-invariants coincides with $\Omega^1(\overline{M}_K) \otimes \Qb$. According to the multiplicity one theorem, the space $\Omega$ decomposes as a direct sum of distinct irreducible admissible representations of $G$:
\begin{equation*}
\Omega = \bigoplus_{\pi \in \Pi} \Omega(\pi).
\end{equation*}
Let $\Pi(K)$ be the set of those representations $\pi \in \Pi$ satisfying $\Omega(\pi)^K \neq 0$. We have a direct sum decomposition
\begin{equation*}
\Omega^1(\overline{M}_K) \otimes \Qb = \bigoplus_{\pi \in \Pi(K)} \Omega(\pi)^K,
\end{equation*}
and the spaces $\Omega(\pi)^K$ are pairwise non-isomorphic simple $\T_K \otimes \Qb$-modules \cite[p. 393]{langlands}.

\begin{lem}\label{iso TK}
The canonical map
\begin{equation*}
\rho_{K} : \T_K \otimes \Qb \to \prod_{\pi \in \Pi(K)} \End_{\Qb}(\Omega(\pi)^K)
\end{equation*}
is an isomorphism.
\end{lem}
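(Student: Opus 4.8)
The plan is to recognize $\T_K \otimes \Qb$ as a finite-dimensional algebra acting faithfully on the semisimple module $\Omega^1(\overline{M}_K) \otimes \Qb = \bigoplus_{\pi \in \Pi(K)} \Omega(\pi)^K$, and then invoke the Jacobson density theorem together with the fact that the isotypic pieces are pairwise non-isomorphic. First I would observe that $\T_K$, being defined as $\rho_\Omega(\tilde\T_K)$, is by construction a subalgebra of $\End_\Q(\Omega^1(\overline{M}_K))$; tensoring with $\Qb$, the algebra $A := \T_K \otimes \Qb$ is a subalgebra of $\End_{\Qb}(\bigoplus_{\pi} \Omega(\pi)^K)$. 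The map $\rho_K$ in the statement is exactly the map recording, for each $\pi \in \Pi(K)$, the action of $A$ on the summand $\Omega(\pi)^K$, which is well defined precisely because the decomposition $\Omega^1(\overline{M}_K)\otimes\Qb = \bigoplus_\pi \Omega(\pi)^K$ is $G$-stable and hence $\tilde\T_K$-stable.

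For \emph{injectivity}: an element of $A$ in the kernel of $\rho_K$ acts by zero on every $\Omega(\pi)^K$, hence by zero on the whole space $\Omega^1(\overline{M}_K)\otimes\Qb$; since $A \subset \End_{\Qb}(\Omega^1(\overline{M}_K)\otimes\Qb)$ this forces the element to be zero. (Equivalently, $\rho_K$ is just the restriction of the inclusion $A \hookrightarrow \End_{\Qb}(\bigoplus_\pi \Omega(\pi)^K) = \prod_\pi \End_{\Qb}(\Omega(\pi)^K)$ followed by projection to the diagonal factors, but since $A$ preserves each summand its image already lies in the product of the $\End_{\Qb}(\Omega(\pi)^K)$, so $\rho_K$ is literally the inclusion and injectivity is automatic.)

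For \emph{surjectivity}: this is where the real content lies. I would use that each $\Omega(\pi)^K$ is a simple $\T_K \otimes \Qb$-module --- this is quoted from \cite[p. 393]{langlands} in the text --- so by Jacobson density (or Burnside's theorem, since we are over the algebraically closed field $\Qb$ and the modules are finite-dimensional) the image of $A$ in each single factor $\End_{\Qb}(\Omega(\pi)^K)$ is \emph{all} of $\End_{\Qb}(\Omega(\pi)^K)$. It then remains to promote these individual surjectivities to a joint surjectivity onto the product. This is the standard argument that the image of a semisimple algebra acting on a sum of pairwise non-isomorphic simple modules is the full product of endomorphism rings: decompose $A$ (or its semisimplification) as a product of simple factors and match them to the isotypic components; concretely, pick for each $\pi$ an element $e_\pi \in A$ whose image is a rank-one idempotent in $\End_{\Qb}(\Omega(\pi)^K)$, use that the $\Omega(\pi)^K$ are mutually non-isomorphic to show one can arrange $e_\pi$ to act as zero on $\Omega(\pi')^K$ for $\pi' \neq \pi$ (this uses precisely the non-isomorphism: a module map argument, or noting that $\Hom_{\T_K\otimes\Qb}(\Omega(\pi)^K, \Omega(\pi')^K) = 0$), and then combine with the per-factor surjectivity to hit arbitrary elements of $\prod_\pi \End_{\Qb}(\Omega(\pi)^K)$.

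The main obstacle, such as it is, is the last bookkeeping step: upgrading "surjective onto each factor" to "surjective onto the product". The key input making this work is the \emph{pairwise non-isomorphism} of the simple modules $\Omega(\pi)^K$ (itself a consequence of multiplicity one), without which the image could be a "diagonal" subalgebra rather than the full product. Once that is in hand, the statement is a formal consequence of the Jacobson density theorem applied factor by factor. I should also make sure at the outset that $\Pi(K)$ is finite --- which it is, since $\Omega^1(\overline{M}_K)\otimes\Qb$ is finite-dimensional (the genus of $\overline{M}_K$ is finite) --- so that the product $\prod_{\pi\in\Pi(K)}$ is finite and no completion issues arise.
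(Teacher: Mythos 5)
Your argument is correct and follows essentially the same route as the paper: injectivity holds by definition, since $\T_K$ is by construction a subalgebra of $\End_\Q(\Omega^1(\overline{M}_K))$, and surjectivity comes from Burnside's theorem applied to the pairwise non-isomorphic simple $\T_K \otimes \Qb$-modules $\Omega(\pi)^K$. The only difference is that the paper cites the Bourbaki form of Burnside's theorem, which directly gives surjectivity onto the full product, whereas you spell out by hand the step upgrading per-factor surjectivity to joint surjectivity.
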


\begin{proof}
The map $\rho_{K}$ is injective by definition of $\T_K$. The surjectivity follows from Burnside's Theorem \cite[\S 5, N°3, Cor. 1 of Prop. 4, p. 79]{bourbaki-alg8}.
\end{proof}

\begin{lem}\label{iso TKK'}
Let $K,K'$ be compact open subgroups of $G$. For any $\pi \in \Pi$, the bimodule $\T_{K,K'}$ maps $\Omega(\pi)^K$ into $\Omega(\pi)^{K'}$. Let $\mathcal{R}=\Pi(K) \cap \Pi(K')$. The map
\begin{equation*}
\rho_{K,K'} : \T_{K,K'} \otimes \Qb \to \prod_{\pi \in \mathcal{R}} \Hom_{\Qb}(\Omega(\pi)^K,\Omega(\pi)^{K'})
\end{equation*}
is an isomorphism of $(\T_K,\T_{K'})$-bimodules.
\end{lem}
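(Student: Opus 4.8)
The plan is to leverage Lemma~\ref{iso TK} for both $K$ and $K'$ together with the bimodule structure. First I would verify the elementary claim that $\T_{K,K'}$ maps $\Omega(\pi)^K$ into $\Omega(\pi)^{K'}$: the operator $T(g) = \alpha'_* \circ \alpha^*$ is built from pull-back and trace maps along covers in the tower, and these are $G$-equivariant on $\Omega$, so $T(g)$ maps each isotypic piece $\Omega(\pi)$ to itself, and it visibly lands in the $K'$-invariants since its image lies in $\Omega^1(\overline{M}_{K'}) \otimes \Qb$. In particular the target product runs only over $\pi \in \mathcal{R} = \Pi(K) \cap \Pi(K')$, since $\Omega(\pi)^K = 0$ forces the source to vanish on that factor and $\Omega(\pi)^{K'} = 0$ forces the target factor to be zero.

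Next, injectivity of $\rho_{K,K'}$. By definition $\T_{K,K'}$ is the image of $\rho_\Omega$ inside $\Hom_\Q(\Omega^1(\overline{M}_K),\Omega^1(\overline{M}_{K'}))$; tensoring with $\Qb$ and using the decomposition $\Omega^1(\overline{M}_K)\otimes\Qb = \bigoplus_{\pi\in\Pi(K)}\Omega(\pi)^K$ and likewise for $K'$, an element of $\T_{K,K'}\otimes\Qb$ is a homomorphism $\Omega^1(\overline{M}_K)\otimes\Qb \to \Omega^1(\overline{M}_{K'})\otimes\Qb$, and it is zero precisely when all its components $\Omega(\pi)^K \to \Omega(\pi')^{K'}$ vanish. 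Since distinct $\pi$ give non-isomorphic simple modules, the only possibly-nonzero components are the diagonal ones with $\pi = \pi' \in \mathcal{R}$, so the map to $\prod_{\pi\in\mathcal{R}}\Hom_{\Qb}(\Omega(\pi)^K,\Omega(\pi)^{K'})$ is injective.

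For surjectivity I would argue as follows. Pick $g \in G$ with $K' = g^{-1} K g$ (or more flexibly, reduce to the case where we can move between $K$-invariants and $K'$-invariants); actually the cleanest route is to use the bimodule structure directly. The space $\Omega(\pi)$ is an irreducible admissible representation, hence for compact open $K, K'$ with $\Omega(\pi)^K, \Omega(\pi)^{K'}$ both nonzero, the full Hecke bimodule realizes all of $\Hom_{\Qb}(\Omega(\pi)^K, \Omega(\pi)^{K'})$: indeed $\Omega(\pi)^{K'} = e_{K'}\Omega(\pi)$ and $\Omega(\pi)^K = e_K\Omega(\pi)$ where $e_K, e_{K'}$ are the idempotents, and the abstract Hecke bimodule $e_{K'}\Qb[G]e_K$ surjects onto $\Hom_{\Qb}(e_K\Omega(\pi), e_{K'}\Omega(\pi))$ by the Jacobson density theorem applied to the simple module $\Omega(\pi)$ over $\Qb[G]$ (the image of $\Qb[G]$ in $\End_{\Qb}(\Omega(\pi))$ is dense, and cutting by the two idempotents gives density in $\Hom(e_K\Omega(\pi),e_{K'}\Omega(\pi))$, which is finite-dimensional hence the image is everything). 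Finally I must match the abstract Hecke bimodule with the geometric one: the map $\rho_\Omega$ is exactly the realization of $\tilde{\T}_{K,K'} = \Z[K\backslash G/K']$ acting on $\Omega$ via the $G$-action, so $\T_{K,K'}\otimes\Qb$ is precisely the image of the abstract bimodule $\Qb[K\backslash G/K']$ in $\bigoplus_{\pi}\Hom_{\Qb}(\Omega(\pi)^K,\Omega(\pi)^{K'})$, and since on each simple factor the image is everything and the factors are non-isomorphic (so there is no further linkage), $\rho_{K,K'}$ is surjective. That $\rho_{K,K'}$ respects the $(\T_K,\T_{K'})$-bimodule structures is immediate from the construction, since all maps in sight are induced by the same $G$-action on $\Omega$.

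\medskip

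\noindent\emph{Main obstacle.} The delicate point is the surjectivity, specifically the passage from the single-representation density statement to the statement for the product over all $\pi \in \mathcal{R}$ simultaneously. One needs that there is no "off-diagonal" constraint linking different $\pi$'s — this is exactly where the hypothesis that the $\Omega(\pi)^K$ are \emph{pairwise non-isomorphic} simple $\T_K\otimes\Qb$-modules (cited from Langlands) does the work, via a Chinese-remainder / semisimplicity argument on the product of matrix algebras: $\T_K\otimes\Qb \cong \prod_{\pi\in\Pi(K)}\End_{\Qb}(\Omega(\pi)^K)$ from Lemma~\ref{iso TK}, and a bimodule over a product of two such matrix-algebra products that is "diagonal" decomposes accordingly, so it suffices to handle each factor, which is the density statement above.
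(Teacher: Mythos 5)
Your proposal is correct, but the key surjectivity step is handled by a genuinely different mechanism than in the paper. The paper first reduces, via the intermediate level $K''=K\cap K'$ and the composition diagram $\T_{K,K''}\otimes\T_{K'',K'}\to\T_{K,K'}$, to the two nested cases $K'\subset K$ and $K\subset K'$; there it only needs to produce a single \emph{nonzero} element of each $\Hom_{\Qb}(\Omega(\pi)^K,\Omega(\pi)^{K'})$ (the unit double coset, acting as the inclusion, resp.\ the trace map, the latter nonzero because it is multiplication by $(K':K)$ on $\Omega(\pi)^{K'}$), and then the $(\T_K,\T_{K'})$-bimodule structure together with Lemma~\ref{iso TK} (Burnside) upgrades ``nonzero'' to ``everything''. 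You instead prove full surjectivity onto each factor directly, by applying the Jacobson density theorem to the irreducible admissible $\Qb[G]$-module $\Omega(\pi)$ (Schur's lemma giving commutant $\Qb$) and cutting by the idempotents $e_K,e_{K'}$; this makes the intermediate-level reduction and the explicit inclusion/trace computation unnecessary, at the price of invoking density for the infinite-dimensional $\Omega(\pi)$ and of the compatibility between the geometric operators $T(g)=\alpha'_*\circ\alpha^*$ and the abstract double-coset action on $\Omega$ --- a standard fact which the paper also uses implicitly, and which only matters up to nonzero scalars since you only need the span. Both arguments then handle the product over $\pi\in\mathcal{R}$ the same way, using the central idempotents coming from Lemma~\ref{iso TK} and the fact that the image is a sub-bimodule, so your ``no off-diagonal linkage'' discussion matches the paper's appeal to the bimodule structure. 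Two small points of hygiene: $e_K$ does not lie in $\Qb[G]$, so the correct object is the smooth Hecke algebra (or the span of the operators $e_{K'}ge_K$), and the finiteness of $\Pi(K)$ (hence of $\mathcal{R}$) is what makes the product a direct sum, which is worth saying when you sum the idempotent-corrected lifts.
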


\begin{proof}
The map $\rho_{K,K'}$ is injective by definition of $\T_{K,K'}$. For the surjectivity, let $K''$ be a compact open subgroup of $G$ such that $\mathcal{R} \subset \Pi(K'')$. We have a commutative diagram
\begin{equation*}
\begin{tikzcd}
\T_{K,K''} \otimes \T_{K'',K'} \otimes \Qb \arrow{r} \arrow{d} & \prod_{\pi \in \mathcal{R}} \Hom(\Omega(\pi)^K, \Omega(\pi)^{K''}) \otimes \Hom(\Omega(\pi)^{K''},\Omega(\pi)^{K'}) \arrow{d} \\
\T_{K,K'} \otimes \Qb \arrow{r} & \prod_{\pi \in \mathcal{R}} \Hom(\Omega(\pi)^K,\Omega(\pi)^{K'}).
\end{tikzcd}
\end{equation*}
Since the right-hand map is surjective, it suffices to show that the maps $\rho_{K,K''}$ and $\rho_{K'',K'}$ are surjective. Choosing $K''=K \cap K'$, we are reduced to show the lemma in the cases $K' \subset K$ and $K \subset K'$. Moreover, since $\T_{K,K'}$ is a $(\T_K,\T_{K'})$-bimodule, and thanks to Lemma \ref{iso TK}, it suffices to show that for any $\pi \in \mathcal{R}$, the map
\begin{equation*}
\rho_\pi : \T_{K,K'} \otimes \Qb \to \Hom_{\Qb}(\Omega(\pi)^K,\Omega(\pi)^{K'})
\end{equation*}
is non-zero.

In the case $K' \subset K$, the image of the double coset $K \cdot 1 \cdot K' = K$ under $\rho_\pi$ is the inclusion map of $\Omega(\pi)^K$ into $\Omega(\pi)^{K'}$, which is non-zero.

In the case $K \subset K'$, the image of the double coset $K \cdot 1 \cdot K' = K'$ under $\rho_\pi$ is the trace map from $\Omega(\pi)^{K}$ to $\Omega(\pi)^{K'}$. Since the restriction of the trace map to $\Omega(\pi)^{K'}$ is the multiplication by the index $(K':K)$, the trace map is non-zero as required.
\end{proof}

Now let us consider the direct limit of the étale cohomology groups of $\overline{M}_K$:
\begin{equation*}
H = \varinjlim_K H^1_\et(\overline{M}_K \otimes_\Q \Qb,\Z_\ell) \otimes \Qb_\ell.
\end{equation*}
The space $H$ is endowed with two commuting actions of $G$ and $\Gamma_\Q = \Gal(\Qb/\Q)$, and we have
\begin{equation*}
H^K = H^1_\et(\overline{M}_K \otimes_\Q \Qb,\Z_\ell) \otimes \Qb_\ell.
\end{equation*}

We will now see how to ``separate'' these two actions. Let us fix an embedding of $\Qb$ into $\Qb_\ell$.

\begin{definition}
For any $\pi \in \Pi$, let $V_\pi = \Hom_{G}(\Omega(\pi),H)$.
\end{definition}

Note that $V_\pi$ is a $\Qb_\ell$-vector space endowed with an action of $\Gamma_\Q$.

\begin{lem}\label{decomp HK}
The Galois representation $V_\pi$ is $2$-dimensional, and we have a $G \times \Gamma_\Q$-equivariant isomorphism
\begin{equation*}
H \cong \bigoplus_{\pi \in \Pi} \Omega(\pi) \otimes_{\Qb} V_\pi.
\end{equation*}
In particular, for any compact open subgroup $K$ of $G$, we have a $\T_K[\Gamma_\Q]$-equivariant isomorphism
\begin{equation*}
H^K \cong \bigoplus_{\pi \in \Pi(K)} \Omega(\pi)^K \otimes_{\Qb} V_\pi.
\end{equation*}
\end{lem}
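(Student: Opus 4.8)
The plan is to use the fact that $H^1_\et(\overline{M}_K \otimes \Qb, \Z_\ell) \otimes \Qb_\ell$ carries a Hodge--Tate-type decomposition built from modular forms and their complex conjugates, or more directly, to exploit the Eichler--Shimura isomorphism together with the $G$-action on the tower. First I would recall that $H$ is an admissible $G$-representation with $H^K$ finite-dimensional, so as a $\Qb_\ell[G]$-module it decomposes into isotypic pieces. By strong approximation / the structure of automorphic representations of $\GL_2$ over $\Q$ appearing in the cohomology of modular curves, the irreducible constituents of $H$ are exactly the $\pi \in \Pi$ occurring in $\Omega$ (together with, a priori, their conjugates or duals); the key input is that the same multiplicity-one set $\Pi$ governs both $\Omega$ and $H$. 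Concretely, one uses the comparison between the Betti realization $H^1_\et(\overline{M}_K \otimes \Qb, \Qb_\ell) \cong H^1_{\mathrm{B}}(\overline{M}_K(\CC),\Q) \otimes \Qb_\ell$ and the Hodge decomposition $H^1_{\mathrm{B}} \otimes \CC \cong \Omega^1 \oplus \overline{\Omega^1}$, which as a $G$-module on the tower gives $\Omega \oplus \bar\Omega$; but $\bar\Omega \cong \Omega$ as abstract $G$-representations (complex conjugation permutes the $\pi$, and in fact each $\pi$ is defined over a number field and its conjugates already lie in $\Pi$ by multiplicity one for holomorphic modular forms), so each $\pi \in \Pi$ appears in $H$ with multiplicity $2$.

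Given that, the decomposition $H \cong \bigoplus_{\pi \in \Pi} \Omega(\pi) \otimes_{\Qb} V_\pi$ with $V_\pi = \Hom_G(\Omega(\pi), H)$ is the canonical isotypic decomposition of the admissible $G$-module $H$: since the $\Omega(\pi)$ are pairwise non-isomorphic irreducible admissible representations, the evaluation map $\bigoplus_\pi \Omega(\pi) \otimes_{\Qb} \Hom_G(\Omega(\pi),H) \to H$ is a $G$-equivariant isomorphism, and because the $\Gamma_\Q$-action on $H$ commutes with the $G$-action, it acts on each multiplicity space $V_\pi$, making the isomorphism $G \times \Gamma_\Q$-equivariant. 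The dimension count just made shows $\dim_{\Qb_\ell} V_\pi = 2$. Taking $K$-invariants is exact and commutes with the $G$-decomposition, and $(\Omega(\pi) \otimes V_\pi)^K = \Omega(\pi)^K \otimes V_\pi$ since $\Gamma_\Q$ (hence $V_\pi$) is untouched by $K$; the summands with $\pi \notin \Pi(K)$ vanish because $\Omega(\pi)^K = 0$ there. The $\T_K$-action on $H^K$ comes from the $G$-action on $H$ and hence respects the decomposition, acting on $\Omega(\pi)^K$ and trivially on $V_\pi$, so the isomorphism is $\T_K[\Gamma_\Q]$-equivariant as claimed.

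I would organize the write-up as: (i) reduce to showing $\pi \mapsto \Omega(\pi)$ and $\pi \mapsto (\text{constituents of } H)$ match and that each multiplicity is $2$; (ii) invoke Eichler--Shimura / Faltings' comparison to get $H^K \otimes_{\Qb_\ell} \CC \cong (\Omega^1(\overline M_K) \oplus \overline{\Omega^1(\overline M_K)}) \otimes \CC$ compatibly with Hecke and with the tower; (iii) identify $\overline{\Omega}$ with $\Omega$ at the level of abstract $G$-representations using that $\Pi$ is stable under the relevant conjugation (holomorphic multiplicity one); (iv) deduce multiplicity $2$ and apply abstract isotypic decomposition of admissible representations; (v) take $K$-invariants. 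The main obstacle is step (iii) together with making the $G$-equivariance in step (ii) precise: one must check that the Eichler--Shimura isomorphism is compatible with the full $G = \GL_2(\A_f)$-action on the tower (not merely with individual Hecke operators at one level), and that passing to $\Qb$-coefficients and comparing the antiholomorphic part $\bar\Omega$ with $\Omega$ does not secretly introduce a dual or a twist that would disturb the pairing of $\pi$'s — here one uses that each $\Omega(\pi)$, being generated by a holomorphic newform, has a model over a number field whose Galois conjugates are again in $\Pi$, so $\bar\Omega \cong \Omega$ as $\Qb$-linear $G$-representations. Everything else is formal from admissibility and multiplicity one, already available in the excerpt.
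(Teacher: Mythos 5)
Your proposal is correct in outline and follows the same skeleton as the paper (Betti--étale comparison, Hodge-theoretic splitting of $H^1_{\B}$, multiplicity $2$, abstract isotypic decomposition, then take $K$-invariants), but it resolves the one nontrivial point — identifying the antiholomorphic half with $\Omega$ as a $G$-representation — by a genuinely different device. You compare $\overline{\Omega}$ with $\Omega$ through conjugation on coefficients, invoking rational models of the $\Omega(\pi)$ and the stability of $\Pi$ under Galois/complex conjugation of newforms; this works (conjugation permutes $\Pi$, and multiplicity one then still gives $\dim V_\pi = 2$ for every $\pi$), but it is exactly the step you flag as the main obstacle, and it drags in rationality of the $\pi$'s, a choice of embeddings, and the worry about duals or twists. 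The paper sidesteps all of this with a single trick: instead of conjugating coefficients, it pulls back along the complex conjugation $c$ of the real variety $\overline{M}_K(\CC)$, using the explicit $\CC$-linear isomorphism $(\omega,\omega') \mapsto [\omega + c^*\omega']$ from $\Omega^1(\overline{M}_K(\CC)) \oplus \Omega^1(\overline{M}_K(\CC))$ onto $H^1_{\B}(\overline{M}_K(\CC),\CC)$. Since $c$ commutes with the $G$-action on the tower (the correspondences being defined over $\Q$), this gives $H \cong (\Omega \oplus \Omega) \otimes \Qb_\ell$ as $G$-modules directly, with no permutation of $\Pi$ and no appeal to fields of definition; the dimension count $\dim V_\pi = 2$ then falls out of Schur's lemma exactly as in your step (iv). So your argument is acceptable, but the paper's $c^*$-argument is the cleaner answer to the compatibility question you raise in step (ii)--(iii).
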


\begin{proof}
Let us fix an isomorphism $\Qb_\ell \cong \CC$. By the comparison theorem between Betti and étale cohomology, we have
\begin{equation*}
H \cong \varinjlim_K H^1_B(\overline{M}_K(\CC),\Qb_\ell).
\end{equation*}
On the other hand, we have a $\CC$-linear isomorphism
\begin{align*}
\Omega^1(\overline{M}_K(\CC)) \oplus \Omega^1(\overline{M}_K(\CC)) & \xrightarrow{\cong} H^1_B(\overline{M}_K(\CC),\CC) \\
(\omega,\omega') & \mapsto [\omega + c^* \omega']
\end{align*}
where $c$ denotes the complex conjugation on $\overline{M}_K(\CC)$. It follows that $H \cong (\Omega \oplus \Omega) \otimes \Qb_\ell$. Since
\begin{equation*}
\Hom_{G}(\Omega(\pi),\Omega(\pi')) = \begin{cases} \Qb & \textrm{if } \pi = \pi' \\
0 & \textrm{if } \pi \neq \pi',
\end{cases}
\end{equation*}
we deduce that $V_\pi$ has dimension 2. Finally, there is a canonical map $\Omega(\pi) \otimes V_\pi \to H$, and the space $H$ decomposes as the direct sum of the images of these maps.
\end{proof}

The following lemma is well-known (see the proof of \cite[Thm 4.4]{ribet:twists}).

\begin{lem}\label{rep Vpi}
The representation $V_\pi$ is irreducible, and we have
\begin{equation*}
\Hom_{\Gamma_\Q}(V_\pi,V_{\pi'})= \begin{cases} \Qb_\ell & \textrm{if } \pi \cong \pi',\\
0 & \textrm{if }  \pi \not\cong \pi'. \end{cases}
\end{equation*}
\end{lem}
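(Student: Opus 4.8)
The plan is to identify $V_\pi$, together with its $\Gamma_\Q$-action, with the $\ell$-adic Galois representation attached to a newform, and then to read off both assertions from classical facts: the Weil bound (equivalently the Ramanujan--Petersson estimate) and strong multiplicity one for newforms.

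First I would make the $\Gamma_\Q$-action on $V_\pi$ explicit. By Lemma~\ref{decomp HK} and the Atkin--Lehner--Li newform theory, each $\pi\in\Pi$ corresponds to a weight-$2$ newform $f_\pi$ of some level $N_\pi$ and nebentypus $\varepsilon_\pi$, with $T_p$ and the diamond operator $\langle p\rangle$ acting on $\Omega(\pi)^K$ (for $K$ small enough that $\Omega(\pi)^K\neq 0$) by the scalars $a_p(f_\pi)$ and $\varepsilon_\pi(p)$. Feeding the decomposition $H^K\cong\bigoplus_{\pi\in\Pi(K)}\Omega(\pi)^K\otimes_{\Qb}V_\pi$ of Lemma~\ref{decomp HK} into the Eichler--Shimura congruence relation on $\overline{M}_K$ then shows that, for every prime $p\nmid N_\pi\ell$, the representation $V_\pi$ is unramified at $p$ and $\mathrm{Frob}_p$ acts on it with characteristic polynomial $X^2-a_p(f_\pi)X+\varepsilon_\pi(p)\,p$; equivalently, $V_\pi$ is the Deligne representation $\rho_{f_\pi,\ell}$.

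Irreducibility is, I expect, the crux. Suppose $V_\pi$ contained a $\Gamma_\Q$-stable line, with $\Gamma_\Q$ acting through a continuous character $\psi\colon\Gamma_\Q\to\Qb_\ell^\times$; then $\psi$ is unramified outside $N_\pi\ell$. On the one hand, the Weil conjectures for $\overline{M}_K$ modulo $p$ (via the Eichler--Shimura relation above) force every eigenvalue of $\mathrm{Frob}_p$ on $V_\pi$ to have absolute value $p^{1/2}$ under every embedding $\Qb\hookrightarrow\CC$, so $|\psi(\mathrm{Frob}_p)|=p^{1/2}$ for all $p\nmid N_\pi\ell$. On the other hand, $V_\pi$ is Hodge--Tate (being a subquotient of the étale cohomology of a smooth projective $\Q$-variety), hence so is $\psi$; and an $\ell$-adic character of $\Gamma_\Q$ that is Hodge--Tate and unramified almost everywhere is, up to a finite-order twist, an integral power of the cyclotomic character, so $|\psi(\mathrm{Frob}_p)|$ is a fixed integral power of $p$ --- contradicting $|\psi(\mathrm{Frob}_p)|=p^{1/2}$. (Equivalently, one may simply invoke that $\rho_{f,\ell}$ is irreducible for every newform $f$, or observe that such a $\psi$ would force $f_\pi$ to have the $q$-expansion of an Eisenstein series, contradicting cuspidality.) Hence $V_\pi$ is irreducible.

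Finally I would conclude. Since $\Qb_\ell$ is algebraically closed and $V_\pi$ is irreducible, Schur's lemma gives $\End_{\Gamma_\Q}(V_\pi)=\Qb_\ell$, which handles the case $\pi\cong\pi'$. If $\pi\not\cong\pi'$, then $f_\pi\neq f_{\pi'}$, so strong multiplicity one yields $a_p(f_\pi)\neq a_p(f_{\pi'})$ for infinitely many $p$; since $\tr(\mathrm{Frob}_p\mid V_\pi)=a_p(f_\pi)$ and traces of Frobenius separate semisimple $\Gamma_\Q$-representations, $V_\pi\not\cong V_{\pi'}$, and a nonzero homomorphism between non-isomorphic irreducible representations cannot exist, so $\Hom_{\Gamma_\Q}(V_\pi,V_{\pi'})=0$. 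The only non-formal ingredients are the Eichler--Shimura relation (identifying Frobenius traces with Hecke eigenvalues) and the purity/Ramanujan estimate behind irreducibility; the rest is Schur's lemma and newform theory, and for the write-up one may just point to \cite[proof of Thm~4.4]{ribet:twists}.
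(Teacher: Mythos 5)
Your proposal is correct, and it follows essentially the route the paper itself relies on: the paper gives no independent argument but defers to the proof of \cite[Thm.~4.4]{ribet:twists}, which rests on exactly the ingredients you assemble --- identifying $V_\pi$ with the Deligne representation of the associated weight-$2$ newform via Eichler--Shimura, irreducibility à la Ribet (Weil/Ramanujan bounds against the classification of abelian $\ell$-adic characters of $\Gamma_\Q$, your Hodge--Tate step being a standard substitute for Ribet's original local argument), and strong multiplicity one plus Čebotarev with Schur's lemma to separate $V_\pi$ and $V_{\pi'}$. So your write-up is a sound expansion of the citation rather than a genuinely different proof.
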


\begin{proof}[Proof of the main theorem]
Let $K,K'$ be compact open subgroups of $G$ such that $M_K$ and $M_{K'}$ are geometrically connected. Consider the composite map
\begin{equation*}
\begin{tikzcd}
\T_{K,K'} \otimes \Qb_\ell \arrow{r}{\rho_J \otimes 1} \arrow[bend left=15]{rr}{\rho_\et} & \Hom(J_K,J_{K'}) \otimes \Qb_\ell \arrow{r}{\mu} & \Hom_{\Gamma_\Q}(H^K,H^{K'}).
\end{tikzcd}
\end{equation*}
Since these maps are injective, it suffices to show that $\rho_\et$ is surjective, and for this it is enough to compare the dimensions. Let $\mathcal{R}=\Pi(K) \cap \Pi(K')$. By Lemma \ref{iso TKK'}, we have
\begin{equation*}
\dim \T_{K,K'} = \sum_{\pi \in \mathcal{R}} (\dim \Omega(\pi)^K) (\dim \Omega(\pi)^{K'}).
\end{equation*}
On the other hand, using Lemmas \ref{decomp HK} and \ref{rep Vpi}, we get
\begin{equation*}
\Hom_{\Gamma_\Q} (H^K,H^{K'}) = \bigoplus_{\pi \in \mathcal{R}} \Hom(\Omega(\pi)^K,\Omega(\pi)^{K'}) \otimes \Qb_\ell,
\end{equation*}
and thus $\dim \Hom_{\Gamma_\Q} (H^K,H^{K'}) = \dim \T_{K,K'}$ as desired.
\end{proof}

\section{Generalization to abelian extensions}

Let $K$ be a compact open subgroup of $G$, and let $F$ be the base field of $M_K$. Let $F'$ be a finite abelian extension of $\Q$ containing $F$, and let $U_{F'}$ be the subgroup of $U=\det(K)$ defined by $U_{F'} = \chi(\Gal(\Q^{\mathrm{ab}}/F'))$. Then we have a canonical isomorphism $M_K \otimes_F F' \cong M_{K_{F'}}$ where $K_{F'}$ is the subgroup of $K$ defined by
\begin{equation*}
K_{F'} = \{g \in K : \det(g) \in U_{F'}\}.
\end{equation*}

Let $K,K'$ be compact open subgroups of $G$ such that the base fields of $M_K$ and $M_{K'}$ are equal to a fixed finite abelian extension $F$ of $\Q$.

\begin{definition}
Let $T=(X,\alpha,\alpha')$ be a finite correspondence between $\overline{M}_K$ and $\overline{M}_{K'}$, seen as curves over $\Q$. We say that \emph{$T$ is defined over $F$} if the following diagram commutes:
\begin{equation*}
\begin{tikzcd}
& X \arrow{dl}[swap]{\alpha} \arrow{dr}{\alpha'} \\
\overline{M}_K \arrow{dr}[swap]{\delta} \arrow[dashed]{rr}{T} & & \overline{M}_{K'} \arrow{dl}{\delta'}\\
& \Spec F
\end{tikzcd}
\end{equation*}
\end{definition}

\begin{lem}\label{lem Tg defined over F}
Let $U_F = \chi(\Gal(\Q^{\mathrm{ab}}/F))$ and let $g \in G$. The correspondence $\tilde{T}(g)=KgK'$ is defined over $F$ if and only if $\det(g) \in \Q_{>0} \cdot U_F$.
\end{lem}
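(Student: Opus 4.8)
The statement asks us to characterise when the Hecke correspondence $\tilde{T}(g)$ between $\overline{M}_K$ and $\overline{M}_{K'}$ descends to the common base field $F$. The plan is to work on the level of $\CC$-points and connected components, using that both $\overline{M}_K$ and $\overline{M}_{K'}$ are geometrically connected over $F$, and that the Galois action of $\Gamma_\Q$ on the set of geometric connected components of $\overline{M}_{K_1}$, for a general compact open $K_1$, is given through the cyclotomic character by translation on $\hat{\Z}^\times/\det(K_1)$. Concretely, a finite correspondence $T=(X,\alpha,\alpha')$ with $X$ smooth is defined over $F$ precisely when, after base change to $\Qb$, each geometric component of $X$ maps under $\alpha$ and $\alpha'$ to geometric components lying over the same point of $\Spec F$ — but since $\overline{M}_K$ and $\overline{M}_{K'}$ are already geometrically connected over $F$, this is automatic \emph{except} for the requirement that $X$ itself, with its two maps, be defined over $F$; equivalently, that $\Gamma_F = \Gal(\Qb/F)$ preserve the $\Qb$-structure $(X,\alpha,\alpha')$ while $\Gamma_\Q$ need not.

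First I would recall the explicit model $X = \overline{M}_{K\cap gK'g^{-1}}$ with $\alpha,\alpha'$ as in Section \ref{adelic}, and observe that $\tilde{T}(g)$ is, by construction, a correspondence defined over the base field of $M_{K\cap gK'g^{-1}}$, namely the field $F_g$ cut out by $\chi^{-1}(\det(K\cap gK'g^{-1}))$. Next I would analyse $\det(K\cap gK'g^{-1})$. Using $\det(gK'g^{-1}) = \det(K')$ (determinant is conjugation-invariant on $\GL_2$) one gets, at least up to finite-index subtleties, that $\det(K\cap gK'g^{-1})$ is essentially $\det(K)\cap\det(K')$; since both base fields equal $F$, we have $\det(K)$ and $\det(K')$ both cutting out $F$, so $F_g \supseteq F$ with the discrepancy controlled entirely by how $\det(g)$ interacts with the components. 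The key computation is that the component map translates: on $\pi_0$, the correspondence $\tilde{T}(g)$ sends the component indexed by $u\det(K)$ to the one indexed by $u\det(g)\det(K')$. Thus $\tilde{T}(g)$ commutes with the two structural maps to $\Spec F$ (i.e. respects the $\Gamma_F$-action on components on both sides and the $F$-rational identification of the single component) if and only if multiplication by $\det(g)$ acts trivially on $\hat{\Z}^\times/U_F$, i.e. $\det(g) \in U_F$ — but one must remember that $\GL_2^+(\Q)$ acts with positive determinant, so the ambiguity $\Q_{>0}$ enters: the actual condition is $\det(g) \in \Q_{>0}\cdot U_F$.

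The cleanest way to carry this out rigorously is to pass to the Galois action directly. I would fix the cyclotomic isomorphism $\chi$, recall that for any compact open $K_1$ the geometric components of $\overline{M}_{K_1}$ form a torsor-like set identified with $\hat{\Z}^\times/\det(K_1)$ on which $\sigma\in\Gamma_\Q$ acts by multiplication by $\chi(\sigma)$, and that a $\Q$-defined correspondence $T$ between two such curves is defined over $F$ iff it is $\Gamma_F$-equivariant \emph{as a $\Qb$-correspondence together with its two projections factoring compatibly through $\Spec F$}. For $T=\tilde T(g)$ the two projections $\alpha,\alpha'$ act on components by $u\mapsto u$ and $u\mapsto u\det(g)$ respectively (reading the adelic formulas $\alpha[\tau,h]=[\tau,h]$, $\alpha'[\tau,h]=[\tau,hg]$ and taking determinants), so the compatibility with the maps $\delta,\delta'$ to $\Spec F$ amounts to: the map ``multiply by $\det(g)$'' from $\hat{\Z}^\times/\det(K)$ to $\hat{\Z}^\times/\det(K')$ descends to the identity on $\hat{\Z}^\times/U_F$ (both quotients further surject onto it, since $F$ is the base field of both). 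Chasing this through, and accounting for the fact that the adelic determinant is only well-defined modulo $\det$ of the relevant groups \emph{and} that $\GL_2^+(\Q)$-equivalence introduces the factor $\Q_{>0}$ (the image of $\det$ on $\GL_2^+(\Q)$), one arrives exactly at $\det(g)\in\Q_{>0}\cdot U_F$.

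The main obstacle I expect is the bookkeeping around the two sources of indeterminacy: the purely adelic ambiguity in $\det(g)$ modulo the determinants of $K$ and $K'$, versus the rational ambiguity $\Q_{>0}$ coming from the left $\GL_2^+(\Q)$-action on $\h\times G$. One has to be careful that $\Q_{>0} \cap \hat{\Z}^\times = \{1\}$ inside $\A_f^\times$ so that $\Q_{>0}\cdot U_F$ is genuinely a larger subgroup than $U_F$, and that the condition extracted is insensitive to the choice of double-coset representative $g$ (if $g' = k_1 g k_2$ with $k_i$ in $K$, $K'$ then $\det(g') \in \det(K)\det(g)\det(K') \subseteq U_F\det(g)U_F$, so the condition $\det(g)\in\Q_{>0}U_F$ is indeed a property of $KgK'$). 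Once these compatibilities are checked, the equivalence follows from the elementary description of $\pi_0$ and its Galois action, with no deeper input needed.
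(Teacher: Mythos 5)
The paper states this lemma without proof, so there is nothing to compare against verbatim; judged on its own, your argument is essentially the intended one and is correct in substance. The real content is exactly what you isolate: by definition, $\tilde{T}(g)$ is defined over $F$ iff $\delta\circ\alpha=\delta'\circ\alpha'$ as morphisms $\overline{M}_{K\cap gK'g^{-1}}\to\Spec F$, and since $F/\Q$ is étale such morphisms are determined by the induced (Galois-equivariant) maps on geometric components; with $\pi_0$ identified with $\Q_{>0}\backslash\A_f^\times/\det(\cdot)$ via the determinant, $\alpha$ induces the identity and $\alpha'$ induces multiplication by $\det(g)$, and since $\det(K)=\det(K')=U_F$ (both base fields equal $F$) the two composites agree iff multiplication by $\det(g)$ is trivial on $\Q_{>0}\backslash\A_f^\times/U_F$, i.e. $\det(g)\in\Q_{>0}\cdot U_F$; your well-definedness check on the double coset is also right. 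Two blemishes, neither fatal: first, the aside that $\det(K\cap gK'g^{-1})$ is ``essentially'' $\det(K)\cap\det(K')$ is not true in general (the determinant of an intersection can be strictly smaller), but fortunately nothing in the argument uses it; second, the opening paragraph misreads the definition for a moment (``defined over $F$'' does not ask for an intrinsic $F$-structure on $X$ or a $\Gamma_F$-equivariance statement beyond the commutativity of the diagram), and the sentence claiming the condition is ``automatic except for\dots'' should simply be deleted, since the later component computation is the whole proof. To make the write-up rigorous you only need to state cleanly the one standard fact you lean on: two $\Q$-morphisms from a reduced finite-type scheme to $\Spec F$ coincide iff they induce the same map $\pi_0(X_{\Qb})\to\Hom(F,\Qb)$, which, together with the paper's description of $\pi_0(M_K(\CC))$ and the cyclotomic Galois action on it, yields the stated equivalence.
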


We denote by $\T_{K,K';F}$ the subgroup of $\T_{K,K'}$ generated by those correspondences $T(g)$ which are defined over $F$. Note that we have a canonical map $\rho_J : \T_{K,K';F} \to \Hom_F(J_K,J_{K'})$ where $J_K$ (resp. $J_{K'}$) denotes the Jacobian variety of $\overline{M}_K$ (resp. $\overline{M}_{K'}$) over $F$.

\begin{thm}\label{thm2}
Let $K,K'$ be compact open subgroups of $\GL_2(\A_f)$, and let $F$ be a finite abelian extension of $\Q$ containing the base fields of $M_K$ and $M_{K'}$. Then the canonical map
\begin{equation*}
\rho_J : \T_{K_F,K'_F;F} \otimes \Q \to \Hom_F(J_K,J_{K'}) \otimes \Q
\end{equation*}
is an isomorphism.
\end{thm}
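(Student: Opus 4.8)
The plan is to reduce Theorem~\ref{thm2} to Theorem~\ref{thm1} by the same cohomological strategy, but now keeping track of the residual Galois action of $\Gamma_F = \Gal(\Qb/F)$. Fix a prime $\ell$ and an embedding $\Qb \hookrightarrow \Qb_\ell$. Since $M_K \otimes_F F$ is geometrically connected, we have $J_K \otimes_F F = J_{K_F}$ in the notation of the excerpt (where now the base field of $M_{K_F}$ is $\Q$, and we view it over $F$), so $\Hom_F(J_K,J_{K'}) = \Hom_{\Gamma_F}(J_{K_F},J_{K'_F})$ acts faithfully on $H^{K_F} = H^1_\et(\overline{M}_{K_F}\otimes_\Q\Qb,\Z_\ell)\otimes\Qb_\ell$. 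Thus it suffices to show that the composite
\begin{equation*}
\T_{K_F,K'_F;F} \otimes \Qb_\ell \xrightarrow{\rho_J \otimes 1} \Hom_{\Gamma_F}(J_{K_F},J_{K'_F}) \otimes \Qb_\ell \xrightarrow{\mu} \Hom_{\Gamma_F}(H^{K_F},H^{K'_F})
\end{equation*}
is an isomorphism. Injectivity is automatic as in the proof of Theorem~\ref{thm1} (faithfulness on tangent spaces, hence on cohomology), so the whole content is again a dimension count — but this time over $\Gamma_F$ rather than $\Gamma_\Q$, and on the source side we have restricted to correspondences defined over $F$.

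First I would compute the right-hand side. By Lemma~\ref{decomp HK} we have $H^{K_F} \cong \bigoplus_{\pi \in \Pi(K_F)} \Omega(\pi)^{K_F} \otimes_{\Qb} V_\pi$, so
\begin{equation*}
\Hom_{\Gamma_F}(H^{K_F},H^{K'_F}) = \bigoplus_{\pi \in \Pi(K_F),\ \pi' \in \Pi(K'_F)} \Hom_{\Qb}(\Omega(\pi)^{K_F},\Omega(\pi')^{K'_F}) \otimes_{\Qb} \Hom_{\Gamma_F}(V_\pi,V_{\pi'}).
\end{equation*}
The new phenomenon is that over $\Gamma_F$ the spaces $\Hom_{\Gamma_F}(V_\pi,V_{\pi'})$ can be nonzero even when $\pi \not\cong \pi'$: restricting the $2$-dimensional irreducible $\Gamma_\Q$-representation $V_\pi$ to $\Gamma_F$ may make it reducible, and two non-isomorphic $V_\pi, V_{\pi'}$ may become isomorphic after twisting by a character of $\Gal(F/\Q)$ — this is precisely the inner-twist mechanism of Ribet discussed in the introduction. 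So the dimension of the target is governed by pairs $(\pi,\pi')$ together with $\Gamma_F$-homomorphisms $V_\pi \to V_{\pi'}$, and one should organize these by the action of $\Gal(F/\Q)$, whose group algebra acts on the whole picture (with $\sigma \in \Gal(F/\Q)$ twisting both $\pi$ and $V_\pi$ compatibly).

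Next I would compute the left-hand side. The ambient space $\T_{K_F,K'_F} \otimes \Qb$ is $\prod_{\pi \in \Pi(K_F)\cap\Pi(K'_F)} \Hom(\Omega(\pi)^{K_F},\Omega(\pi)^{K'_F})$ by Lemma~\ref{iso TKK'}; inside it, $\T_{K_F,K'_F;F}$ is spanned by the $T(g)$ with $\det(g) \in \Q_{>0}\cdot U_F$ (Lemma~\ref{lem Tg defined over F}). The key observation is that $\Gal(F/\Q) = U_{\Q}/(\Q_{>0}U_F)$ (via $\chi$ and the determinant) acts on $\T_{K_F,K'_F}$ by the Hecke operators $T(g)$ with $g$ a scalar representing a class in $\det(K)/(\Q_{>0}U_F)$ — equivalently, by the diamond-type operators coming from the deck group of $M_{K_F} \to M_K$ — and $\T_{K_F,K'_F;F}$ is exactly the subspace on which this action, combined with the geometric Galois descent, is "of type $F$". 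Concretely, I expect $\T_{K_F,K'_F;F}\otimes\Qb$ to be identified with a sum over $\Gal(F/\Q)$-orbits, matching term by term the $\Hom_{\Gamma_F}$ decomposition above: the Betti/étale comparison shows that the scalar Hecke operator attached to $\sigma \in \Gal(F/\Q)$ acts on $V_\pi$ exactly as $\sigma$ does (both being incarnations of the diamond operators / the $\Gal(F/\Q)$-action on components), so a correspondence is defined over $F$ iff its image in the target lands in the $\Gamma_F$-equivariant part. Running the same reduction as in Lemma~\ref{iso TKK'} — factor through an auxiliary $K''$, reduce to $K\subset K'$ and $K'\subset K$, and check the relevant map is nonzero — but now carrying the $F$-rationality constraint through, gives $\dim \T_{K_F,K'_F;F} = \dim \Hom_{\Gamma_F}(H^{K_F},H^{K'_F})$, completing the proof.

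The main obstacle, I expect, is the precise bookkeeping of the $\Gal(F/\Q)$-action: one must verify that the Hecke-theoretic $\Gal(F/\Q)$-action on $\T_{K_F,K'_F}$ (via scalars/diamonds) is intertwined with the Galois descent action on $\Hom_{\Gamma_F}(H^{K_F},H^{K'_F})$ under $\rho_\et$, and that "defined over $F$" on the correspondence side corresponds exactly to "$\Gamma_F$-equivariant, with the $\Gal(F/\Q)$-action accounted for" on the cohomology side. This amounts to an equivariance statement for the comparison isomorphism of Lemma~\ref{decomp HK} under scalar elements of $G$ together with $\Gamma_\Q$ — essentially Shimura's reciprocity law on connected components — after which the dimension count falls out formally from Lemmas~\ref{iso TK}, \ref{iso TKK'}, \ref{decomp HK} and \ref{rep Vpi} applied over $F$ instead of $\Q$.
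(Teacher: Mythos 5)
There is a genuine gap, at two levels. First, the target of your comparison is the wrong space. An element of $\Hom_F(J_K,J_{K'})$ is realized on the cohomology of a \emph{single} geometric component, i.e.\ it embeds into $\Hom_{\Gamma_F}(H^{K_F}_{\sigma_0},H^{K'_F}_{\sigma_0})$ for one fixed embedding $\sigma_0\colon F\hookrightarrow\Qb$, not into $\Hom_{\Gamma_F}(H^{K_F},H^{K'_F})$ with $H^{K_F}=H^1_\et(\overline{M}_{K_F}\otimes_\Q\Qb,\Z_\ell)\otimes\Qb_\ell$ (also note the base field of $M_{K_F}$ is $F$, not $\Q$). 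The latter space is $\Hom_{\Gamma_F}$ between two induced representations $\Ind_{\Gamma_F}^{\Gamma_\Q}H^{K_F}_{\sigma_0}$ and $\Ind_{\Gamma_F}^{\Gamma_\Q}H^{K'_F}_{\sigma_0}$, hence decomposes into all blocks $\Hom_{\Gamma_F}(H^{K_F}_\sigma,H^{K'_F}_{\sigma'})$; its dimension is $[F:\Q]\cdot\dim\T_{K_F,K'_F}$, which strictly exceeds $\dim\T_{K_F,K'_F;F}$ whenever $F\neq\Q$ and the spaces are nonzero. So the composite you propose to show is an isomorphism is never one, and the dimension equality you aim for is false as stated; only the single block with $\sigma=\sigma'=\sigma_0$ has the right size. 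Second, and more substantively, even with the correct target your dimension count is not carried out and does not ``fall out formally'': writing the target as $\bigoplus_{\pi,\pi'}\Hom(\Omega(\pi)^{K_F},\Omega(\pi')^{K'_F})\otimes\Hom_{\Gamma_F}(V_\pi,V_{\pi'})$ forces you to determine $\Hom_{\Gamma_F}(V_\pi,V_{\pi'})$ for all pairs, i.e.\ to decide exactly when these $2$-dimensional representations become isomorphic or reducible upon restriction to $\Gamma_F$ (inner twists, CM), and then to prove the matching statement that correspondences with $\det(g)\in\Q_{>0}U_F$ account for precisely those extra homomorphisms. None of Lemmas \ref{iso TK}, \ref{iso TKK'}, \ref{decomp HK}, \ref{rep Vpi} gives this; it is exactly the hard point, and your proposal only asserts that the bookkeeping will work out.

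The paper's proof is designed to avoid that computation entirely. It takes the isomorphism $\rho_\et\colon\T_{K,K'}\otimes\Qb_\ell\cong\Hom_{\Gamma_\Q}(H^K,H^{K'})$ already established in the proof of Theorem \ref{thm1}, applied to the groups whose base field is $F$, and then grades both sides by $\gamma\in\Gal(F/\Q)$: the Hecke side by determinant class (the subspaces $\T_{K,K';\gamma}$ with $\det(g)\in\Q_{>0}\cdot\hat\gamma U_F$), the Galois side by Frobenius reciprocity, $\Hom_{\Gamma_\Q}(H^K,H^{K'})\cong\Hom_{\Gamma_F}(H^K_{\sigma_0},H^{K'})=\bigoplus_\gamma\Hom_{\Gamma_F}(H^K_{\sigma_0},H^{K'}_{\sigma_0\gamma})$. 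Since $\T_{K,K';\gamma}$ sends $H^K_\sigma$ into $H^{K'}_{\sigma\gamma}$, the isomorphism respects the grading, and extracting the $\gamma=1$ piece gives $\T_{K,K';F}\otimes\Qb_\ell\cong\Hom_{\Gamma_F}(H^K_{\sigma_0},H^{K'}_{\sigma_0})$ with no need to analyse restrictions of the $V_\pi$ to $\Gamma_F$. If you want to rescue your approach, replace your target by the single-component space and deduce its dimension from the known $\Gamma_\Q$-isomorphism via this grading argument, rather than attempting a fresh multiplicity count over $\Gamma_F$.
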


\begin{proof}
By the above discussion, it is sufficient to prove the theorem in the case the base fields of $M_K$ and $M_{K'}$ are equal to $F$. Let $\Gamma = \Gal(F/\Q)$. For any $\gamma \in \Gamma$, let $\T_{K,K';\gamma}$ denote the subgroup of $\T_{K,K'}$ generated by those correspondences $T(g)$ satisfying $\det(g) \in \Q_{>0} \cdot (\hat{\gamma}U_F)$, where $\hat{\gamma} \in \hat{\Z}^\times$ is any element satisfying $\chi^{-1}(\hat{\gamma})|_F = \gamma$. Since the elements of $\T_{K,K';\gamma}$ are $\gamma$-linear, we have a direct sum decomposition
\begin{equation*}
\T_{K,K'}  = \bigoplus_{\gamma \in \Gamma} \T_{K,K';\gamma}.
\end{equation*}
By the proof of Theorem \ref{thm1}, we have an isomorphism
\begin{equation}\label{iso rho_et}
\rho_\et : \T_{K,K'} \otimes \Qb_\ell \xrightarrow{\cong} \Hom_{\Gamma_\Q}(H^K,H^{K'}).
\end{equation}
We now wish to identify those elements of $\Hom_{\Gamma_\Q}(H^K,H^{K'})$ which come from $\T_{K,K';\gamma}$. Let $\Sigma$ denote the set of embedding of $F$ into $\Qb$. We have
\begin{equation*}
\overline{M}_K \otimes_\Q \Qb = \bigsqcup_{\sigma \in \Sigma} \overline{M}_K \otimes_{F,\sigma} \Qb
\end{equation*}
inducing a direct sum decomposition $H^K = \bigoplus_{\sigma \in \Sigma} H^K_\sigma$ with
\begin{equation*}
H^K_\sigma = H^1_\et(\overline{M}_K \otimes_{F,\sigma} \Qb, \Z_\ell) \otimes \Qb_\ell.
\end{equation*}
Note that the action of $\Gamma_\Q$ on $H^K$ permutes the components $H^K_\sigma$ according to the rule $\gamma \cdot H^K_\sigma = H^K_{\gamma \sigma}$ for any $\gamma \in \Gamma_\Q$. Fixing an element $\sigma_0 \in \Sigma$, we have an isomorphism
\begin{equation*}
\operatorname{Ind}_{\Gamma_F}^{\Gamma_\Q} H^K_{\sigma_0} \cong H^K
\end{equation*}
where $\Gamma_F = \Gal(\Qb/F)$. By Frobenius reciprocity, we have
\begin{equation}\label{iso Frob}
\Hom_{\Gamma_\Q}(H^K,H^{K'}) \cong \Hom_{\Gamma_F}(H^K_{\sigma_0},H^{K'}).
\end{equation}
Moreover $\T_{K,K';\gamma}$ maps $H^K_\sigma$ into $H^{K'}_{\sigma \gamma}$. Combining the isomorphisms (\ref{iso rho_et}) and (\ref{iso Frob}), we get
\begin{equation*}
\T_{K,K';\gamma} \otimes \Qb_\ell \cong \Hom_{\Gamma_F}(H^K_{\sigma_0},H^{K'}_{\sigma_0 \gamma}) \qquad (\gamma \in G).
\end{equation*}
Taking $\gamma=1$, we get a commutative diagram
\begin{equation*}
\begin{tikzcd}
\T_{K,K';F} \otimes \Qb_\ell \arrow{r}{\rho_J \otimes 1} \arrow[bend left=15]{rr}{\rho_\et} & \Hom_F(J_K,J_{K'}) \otimes \Qb_\ell \arrow{r}{\mu} & \Hom_{\Gamma_F}(H^K_{\sigma_0},H^{K'}_{\sigma_0})
\end{tikzcd}
\end{equation*}
where $\rho_\et$ is an isomorphism. We conclude as in the proof of Theorem \ref{thm1}.
\end{proof}

We now give some consequences for endomorphism algebras of modular abelian varieties.

\begin{cor*}\label{cor}
Let $K$ be a compact open subgroup of $G$. Let $F$ be a finite abelian extension of $\Q$ containing the base field of $M_K$.

\begin{enumerate}
\item Let $A/F$ be an abelian subvariety of $J_K/F$. Define
\begin{equation*}
\T_A = \{T \in \T_{K_F;F} : \rho_J(T) \textrm{ leaves stable } A\}.
\end{equation*}
Then the canonical map $\T_A \otimes \Q \to \End_F(A) \otimes \Q$ is surjective.
\item Let $A/F$ be an abelian variety which is a quotient of $J_K/F$. Define
\begin{equation*}
\T^A = \{T \in \T_{K_F;F} : \rho_J(T) \textrm{ factors through } A\}.
\end{equation*}
Then the canonical map $\T^A \otimes \Q \to \End_F(A) \otimes \Q$ is surjective.
\end{enumerate}
\end{cor*}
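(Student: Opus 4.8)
The plan is to deduce the Corollary from Theorem \ref{thm2} by a purely formal argument about how idempotents in the (semisimple up to nilpotents) endomorphism algebra of a Jacobian interact with the bimodule of Hecke correspondences. I first reduce, exactly as in the proof of Theorem \ref{thm2}, to the case where the base field of $M_K$ is equal to $F$, so that $J_K/F$ is the Jacobian of a geometrically connected curve and Theorem \ref{thm2} gives the surjectivity (indeed isomorphism) of $\rho_J : \T_{K_F;F} \otimes \Q \to \End_F(J_K) \otimes \Q$ (the case $K = K'$ of that theorem).

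For part (1), the key observation is that an abelian subvariety $A \subset J_K$ up to isogeny corresponds to an idempotent $e \in \End_F(J_K) \otimes \Q$, and an endomorphism $\phi$ of $A$ up to isogeny is the same as an element $e \tilde\phi e$ for some $\tilde\phi \in \End_F(J_K) \otimes \Q$; conversely $e \psi e$ leaves $A$ stable for any $\psi$. By Theorem \ref{thm2} write $e = \rho_J(u)$ and $\tilde\phi = \rho_J(v)$ with $u, v \in \T_{K_F;F} \otimes \Q$. Then $u \cdot v \cdot u \in \T_{K_F;F} \otimes \Q$ (since $\T_{K_F;F}$ is closed under the bimodule/convolution product — composing correspondences defined over $F$ yields a correspondence defined over $F$), and $\rho_J(uvu) = e \tilde\phi e$ is the given endomorphism of $A$ (viewed inside $\End_F(J_K)$). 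Moreover $uvu \in \T_A$ because $\rho_J(uvu) = e(\cdots)e$ manifestly stabilizes $A$. Hence $\T_A \otimes \Q \to \End_F(A) \otimes \Q$ is surjective. Part (2) is dual: a quotient $A$ of $J_K$ up to isogeny is again cut out by an idempotent $e \in \End_F(J_K)\otimes\Q$ (the projector onto the isogeny factor), an endomorphism of $A$ lifts to $e \tilde\phi e$, and the same computation $\rho_J(uvu) = e\tilde\phi e$ shows that $uvu \in \T^A$ maps onto it; one only needs to check that ``$\rho_J(T)$ factors through $A$'' is equivalent to $\rho_J(T) = e \rho_J(T) e$ (or $= e\rho_J(T)$, depending on conventions), which is immediate once $A$ is identified with $e J_K$ up to isogeny.

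The main point requiring care — and the only place where anything beyond bookkeeping happens — is the closure of $\T_{K_F;F}$ under composition, i.e. that the product in the bimodule $\T_{K,K'}$ of two correspondences defined over $F$ is again defined over $F$. This follows from Lemma \ref{lem Tg defined over F}: the condition $\det(g) \in \Q_{>0} \cdot U_F$ is stable under the convolution $KgK' \cdot K'g'K''$, since on double cosets convolution is supported on products $Kgg'K''$ and $\det(gg') = \det(g)\det(g')$, and $\Q_{>0}\cdot U_F$ is a subgroup of $\A_f^\times$. One should also note $\rho_J(uvu)$ genuinely lands in $\End_F(A) \otimes \Q$, not merely in $\End_{\Qb}(A)\otimes\Q$ — but this is automatic since every $\rho_J(T)$ with $T \in \T_{K_F;F}$ is by construction $F$-rational and composition of $F$-rational maps is $F$-rational. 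No other obstacle arises: semisimplicity of $\End_F(J_K)\otimes\Q$ guarantees the existence of the idempotent $e$ cutting out $A$ up to isogeny, and the rest is the elementary algebra of corner rings $e R e$ inside a ring $R$ admitting a surjection from $\T_{K_F;F}\otimes\Q$.
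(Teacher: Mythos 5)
Your argument is correct, and it rests on the same two pillars as the paper's own proof: Theorem \ref{thm2} applied with $K=K'$, and Poincaré complete reducibility over $F$ (your idempotent $e$ is equivalent to the paper's choice of $p$ with $p\circ\iota=[n]_A$). The difference is in execution. The paper lifts a \emph{single} element: given $\phi\in\End_F(A)$, it forms $\psi=\iota\circ\phi\circ p\in\End_F(J_K)$, lifts $\psi$ through $\rho_J$ by Theorem \ref{thm2}, and notes that $\psi$ stabilizes $A$ and restricts to $[n]\phi$; after tensoring with $\Q$ this already gives surjectivity, and no ring structure on $\T_{K_F;F}$ is needed. You instead lift $e$ and $\tilde\phi$ separately and recombine them as $uvu$ inside the Hecke algebra, which obliges you to prove that $\T_{K_F;F}$ is closed under composition and that $\rho_J$ is multiplicative (compatibility of convolution of double cosets with composition of correspondences). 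Your closure argument via Lemma \ref{lem Tg defined over F} is sound (using $\det(K_F)\subseteq U_F$ and the fact that $\Q_{>0}\cdot U_F$ is a subgroup of $\A_f^\times$), and the multiplicativity is consistent with the paper's assertion that $\T_{K,K'}$ is a $(\T_K,\T_{K'})$-bimodule; but both verifications are superfluous here, since lifting $\iota\circ\phi\circ p$ in one shot avoids them entirely. Two bookkeeping points you should still make explicit: $\T_A$ and $\T^A$ are defined integrally, so after producing a rational lift one must clear denominators (an integer multiple of your $uvu$ lies in $\T_A$, resp.\ $\T^A$); and in part (2), ``factors through $A$'' for the quotient map $\nu:J_K\to A$ means $\rho_J(T)=s\circ\nu$, i.e.\ $\rho_J(T)=\rho_J(T)\,e$ in the isogeny category (right composition with $e$, not left), so the convention you leave open should be fixed accordingly. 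Neither point affects the validity of the proof.
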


\begin{proof}
Let us prove (1). Let $\iota : A \to J_K$ denote the inclusion map. Let $p : J_K \to A$ be a homomorphism such that $p \circ \iota = [n]_A$ for some integer $n \neq 0$. Let $\phi \in \End_F(A)$. Define $\psi = \iota \circ \phi \circ p \in \End_F(J_K)$. By Theorem \ref{thm2}, there exists $T \in \T_{K_F;F} \otimes \Q$ such that $\rho_J(T)=\psi$. Note that $\psi$ leaves stable $A$, so that $T \in \T_A \otimes \Q$, and we have $\psi |_A = [n] \phi$.

The proof of (2) is similar.
\end{proof}

We emphasize that the corollary is true even for elliptic curves with complex multiplication, as long as their endomorphisms are defined over an abelian extension of $\Q$. For example, the elliptic curve $E=X_0(32)$ has complex multiplication by $\Z[i]$ defined over $\Q(i)$. Let
\begin{equation*}
K = K_0(32)_{\Q(i)} = \left\{\begin{pmatrix} a & b \\ c & d \end{pmatrix} \in \GL_2(\hat{\Z}): c \equiv 0 \; (32), ad \equiv 1 \;(4) \right\}.
\end{equation*}
The matrix $\begin{pmatrix} 1 & 0 \\ 8 & 1 \end{pmatrix}$ normalizes $K$, and the canonical map $\T_{K;\Q(i)} \to \End_{\Q(i)} E \cong \Z[i]$ maps the double coset $K \begin{pmatrix} 1 & 0 \\ 8 & 1 \end{pmatrix} K = K \begin{pmatrix} 1 & 0 \\ 8 & 1 \end{pmatrix}$ to the element $\pm i$, hence $\T_{K;\Q(i)} = \End_{\Q(i)} E$.

To conclude this section, let me mention some open questions.

\begin{questions}
\begin{enumerate}
\item Do Theorems \ref{thm1} and \ref{thm2} hold integrally?
\item Do Theorems \ref{thm1} and \ref{thm2} hold in positive characteristic?
\item The analogue of $J_K$ in weight $k>2$ is the motive associated to the space of cusp forms of weight $k$ and level $K$ \cite{scholl:motivesMF}. Do the results presented here extend to these motives?
\end{enumerate}
\end{questions}

\section{Comparison with Ribet's result}

Let $f = \sum_{n \geq 1} a_n q^n$ be a newform of weight $2$ on $\Gamma_1(N)$, and let $A_f$ be the modular abelian variety over $\Q$ associated to $f$. The abelian variety $A_f$ is simple over $\Q$ and the algebra $\End_{\Q}(A_f) \otimes \Q$ is isomorphic to the Hecke field $K_f$ of $f$. Ribet determined in \cite{ribet:twists} the structure of the endomorphism algebra $\End_{\Qb}(A_f) \otimes \Q$. In particular, he proved that this algebra is generated over $K_f$ by finitely many endomorphisms coming from inner twists of $f$. Our goal in this section is to write these endomorphisms in terms of Hecke correspondences, making thus Theorem \ref{thm2} explicit for these endomorphisms.

Let us first recall Ribet's construction \cite[\S 5]{ribet:twists}. We assume that $f$ doesn't have complex multiplication. Let $h$ denote the modular form $h(z)=\sum_{(n,N)=1} a_n q^n$. Let $\Gamma$ denote the set of automorphisms $\gamma$ of $K_f$ such that $f^\gamma = f \otimes \chi_\gamma$ for some Dirichlet character $\chi_\gamma$. Let $m$ denote the least common multiple of $N$ and the conductors of the caracters $\chi_\gamma$. Then $h$ is an eigenform on the group $\Gamma_0(m^2) \cap \Gamma_1(m)$. Let $J$ denote the Jacobian variety of the modular curve associated to this group. By Shimura's construction \cite[\S 2]{ribet:twists}, there exists an optimal quotient $A_h$ of $J$ associated to $h$. Let $\nu : J \to A_h$ denote the canonical projection. The abelian varieties $A_f$ and $A_h$ are isogenous. In particular, their endomorphism algebras are isomorphic. 

For every $\gamma \in \Gamma$, Ribet constructs an endomorphism $\eta_\gamma$ of $A_h$ as follows. Write $f^\gamma = f \otimes \chi$. Let $r$ denote the conductor of $\chi$. For every $u \in \Z$, there is an endomorphism $\alpha_{u/r}$ of $J$ acting on the space of cusp forms as $g \mapsto g(z+u/r)$. Define
\begin{equation*}
\tilde{\eta}_\gamma = \sum_{u \in \Z/r\Z} \chi^{-1}(u) \circ \nu \circ \alpha_{u/r} \in \Hom(J,A_h) \otimes \Q
\end{equation*}
where $\chi^{-1}(u) \in K_f$ is seen as an element of $\End_{\Q}(A_h) \otimes \Q$. Then $\tilde{\eta}_\gamma$ factors through $\nu$ and induces an endomorphism $\eta_\gamma$ of $A_h$. Since $\alpha_{u/r}$ is defined over $\Q(\zeta_r)$, we have $\eta_\gamma \in \End_{\Q(\zeta_r)}(A_h) \otimes \Q$.

Let us now turn to the adelic language. Consider the group
\begin{equation*}
K = K_0(m^2) \cap K_1(m) = \left\{ \begin{pmatrix} a & b \\ c & d \end{pmatrix} \in \GL_2(\hat{\Z}) : c \equiv 0 (m^2), d \equiv 1 (m) \right \}
\end{equation*}
and its subgroup $K' = K_{\Q(\zeta_r)}$. The modular curve $\overline{M}_{K'}$ and its Jacobian $J' = J_{K'}$ are defined over the field $\Q(\zeta_r)$, and we have a canonical isomorphism $J' \cong J_{\Q(\zeta_r)}$.

\begin{lem}\label{lem alpha}
For every $u \in \Z$, we have $\alpha_{u/r} = \rho_{J'} \left(T \left(\begin{smallmatrix} 1 & u/r \\ 0 & 1 \end{smallmatrix} \right) \right)$.
\end{lem}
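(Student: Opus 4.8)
The plan is to match the two maps on the level of their action on the space of weight-$2$ cusp forms, which is faithful, so it suffices to show that $\rho_{J'}(T(\begin{smallmatrix} 1 & u/r \\ 0 & 1 \end{smallmatrix}))$ acts on a cusp form $g$ by $g(z) \mapsto g(z + u/r)$. First I would unwind the adelic definition of $T(g_0)$ for $g_0 = \begin{smallmatrix} 1 & u/r \\ 0 & 1 \end{smallmatrix}$ (interpreted, as throughout the paper, via its image in $\GL_2(\A_f)$): since $g_0$ normalizes $K'$ — because conjugating an upper triangular unipotent by $K' \subset \GL_2(\hat\Z)$ stays unipotent with entry in $\tfrac{1}{r}\hat\Z$, and one checks this lies in $K'$ once $r \mid m$ — the correspondence $\tilde T(g_0)$ degenerates to the automorphism of $\overline{M}_{K'}$ induced on complex points by $[\tau, h] \mapsto [\tau, h g_0]$. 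Then $\rho_{J'}(T(g_0))$ is the induced automorphism of $J'$, and its action on $\Omega^1 = $ cusp forms is by pullback along this automorphism.

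Next I would translate the adelic automorphism $[\tau,h] \mapsto [\tau, h g_0]$ into the classical picture $\Gamma' \backslash \h$, where $\Gamma'$ is the congruence subgroup attached to $K'$. Using strong approximation (every component of $M_{K'}(\CC)$ is $\Gamma_{K'} \backslash \h$ and $g_0$ has determinant $1$, hence preserves each component), the right translation by $g_0$ corresponds on $\h$ to the map induced by an element of $\GL_2^+(\Q)$ representing the same adelic class; a standard computation (as in Shimura or Diamond--Shurman, matching $\begin{smallmatrix} 1 & u/r \\ 0 & 1 \end{smallmatrix}$ in $\GL_2(\A_f)$ against $\begin{smallmatrix} 1 & u/r \\ 0 & 1 \end{smallmatrix} \in \GL_2^+(\Q)$) shows the automorphism of $\h$ is $\tau \mapsto \tau + u/r$. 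Pulling back $g(z)\, dz$ along $z \mapsto z + u/r$ gives $g(z + u/r)\, dz$, which is exactly the action of $\alpha_{u/r}$ recalled before the lemma. This identifies the two operators on $\Omega^1(\overline{M}_{K'})$, hence on $J'$ by faithfulness of the action on the tangent space.

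The main obstacle I anticipate is the careful bookkeeping of the dictionary between the adelic double coset $K' g_0 K'$ and the classical operator, in particular verifying that $g_0$ genuinely normalizes $K'$ (so that the correspondence is an honest automorphism rather than a higher-degree correspondence), and that the finite-adelic right translation by $\begin{smallmatrix} 1 & u/r \\ 0 & 1 \end{smallmatrix}$ matches the rational matrix $\begin{smallmatrix} 1 & u/r \\ 0 & 1 \end{smallmatrix}$ acting on $\h$ with the correct normalization — a subtlety sensitive to conventions for the $\GL_2^+(\Q)$-action and to which component of $M_{K'}(\CC)$ one works on. Once the normalization is pinned down (the hypothesis $r \mid m$ is what makes $g_0 \in \GL_2(\hat\Z)$ and $g_0 K' g_0^{-1} = K'$), the rest is a routine transport of structure. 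I would also remark that one may alternatively argue entirely on $q$-expansions: the diamond-type computation shows $T(g_0)$ sends the eigenform component with nebentypus to its $\chi$-twist-compatible translate, matching $\alpha_{u/r}$ coefficient by coefficient, but the geometric argument above is cleaner and avoids eigenform decompositions.
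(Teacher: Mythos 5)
Your strategy is the same as the paper's: check that $g_0=\left(\begin{smallmatrix}1&u/r\\0&1\end{smallmatrix}\right)$ normalizes $K'$, so that $\tilde T(g_0)$ is essentially an automorphism of $\overline{M}_{K'}$, compute its action on cusp forms as translation by $u/r$, and conclude by faithfulness on $\Omega^1$. But two of your steps are misstated and one is missing. First, the bookkeeping contains two compensating direction errors. With the paper's conventions ($\gamma\cdot(\tau,g)=(\gamma(\tau),\gamma g)$ and $T(g)=\alpha'_*\circ\alpha^*$), the map $\alpha'$ is the right translation $r_{g_0}\colon[\tau,h]\mapsto[\tau,hg_0]$, which on the identity component sends $[\tau,1]$ to $[\tau,\gamma_0]=[\gamma_0^{-1}(\tau),1]$, i.e.\ acts by $\tau\mapsto\tau-u/r$, not $\tau+u/r$; and $T(g_0)=(r_{g_0})_*$ acts on differentials as pullback along $r_{g_0}^{-1}$, not along $r_{g_0}$. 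The two slips cancel, so your final formula $g\mapsto g(z+u/r)$ is correct, but each intermediate claim is false as written, and fixing only one of them would yield $\alpha_{-u/r}$. Second, your justification of the normalization is the wrong computation: what is needed is $g_0K'g_0^{-1}=K'$, i.e.\ conjugating elements of $K'$ by $g_0$, not conjugating $g_0$ by elements of $K'$; and the verification uses more than $r\mid m$. For $k=\left(\begin{smallmatrix}a&b\\c&d\end{smallmatrix}\right)\in K'$ one needs $c\equiv 0\ (m^2)$, $d\equiv 1\ (m)$ \emph{and} $\det k\equiv 1\ (r)$ (the extra condition defining $K'=K_{\Q(\zeta_r)}$) to see, e.g., that the off-diagonal term $(u/r)(d-a)$ lies in $\hat{\Z}$; in fact $g_0$ does \emph{not} normalize $K$ itself, so the passage to $K'$ is essential and should be spelled out rather than waved at.

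Finally, you never address the field of definition, which is the first step of the paper's proof. The map $\rho_{J'}$ here is the one from the abelian-extension section, defined on $\T_{K';\Q(\zeta_r)}$, i.e.\ on correspondences defined over $\Q(\zeta_r)$, and the asserted identity is between $\Q(\zeta_r)$-rational endomorphisms (this is what is needed later, since $X_\gamma$ must lie in $\T_{K';\Q(\zeta_r)}\otimes\Q$). So before any comparison over $\CC$ you must check that $\tilde T(g_0)$ is defined over $\Q(\zeta_r)$; this follows from Lemma \ref{lem Tg defined over F} because $\det g_0=1\in\Q_{>0}\cdot U_{\Q(\zeta_r)}$. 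Once that is in place, your complex-analytic identification of the two operators on $\Omega^1(\overline{M}_{K'})$ (with the directions corrected as above) does give the lemma, exactly as in the paper.
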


\begin{proof}
By Lemma \ref{lem Tg defined over F}, the correspondence $\tilde{T}\left(\begin{smallmatrix} 1 & u/r \\ 0 & 1 \end{smallmatrix}\right)$ is defined over $\Q(\zeta_r)$. Moreover the matrix $\left(\begin{smallmatrix} 1 & u/r \\ 0 & 1 \end{smallmatrix}\right)$ normalizes $K'$, so that $T\left(\begin{smallmatrix} 1 & u/r \\ 0 & 1 \end{smallmatrix}\right)$ acts on $\Omega^1(\overline{M}_{K'})$ by sending a cusp form $g$ to $g(z+u/r)$. It follows that $\alpha_{u/r}^* = \rho_\Omega \left(\tilde{T}\left(\begin{smallmatrix} 1 & u/r \\ 0 & 1 \end{smallmatrix}\right)\right)$, hence the Lemma.
\end{proof}

\begin{lem}\label{lem lambda}
For every $u \in \Z$, there exists $\lambda_u \in \T_{K';\Q(\zeta_r)} \otimes \Q$ such that $\rho_{J'}(\lambda_u)$ factors through $A_h$ and induces $\chi^{-1}(u)$ on $A_h$.
\end{lem}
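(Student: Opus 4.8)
The plan is to read off $\lambda_u$ from part (2) of the Corollary following Theorem \ref{thm2}. I would apply that corollary with the compact open subgroup taken to be $K'$ and the field taken to be $F=\Q(\zeta_r)$. Two hypotheses need to be checked. First, $K'$ has base field $\Q(\zeta_r)$: indeed $\det K'=U_{\Q(\zeta_r)}$, so $M_{K'}$ is geometrically connected over $\Q(\zeta_r)$ and $K'_{\Q(\zeta_r)}=K'$; in particular the Hecke module appearing in the corollary is exactly $\T_{K';\Q(\zeta_r)}$. Second, $A_h$ is a quotient of $J'$ over $\Q(\zeta_r)$: this follows from the canonical isomorphism $J'=J_{K'}\cong J_{\Q(\zeta_r)}$ of the preceding discussion together with the fact that $A_h$ is an optimal quotient of $J$ over $\Q$, the quotient map being the base change $\nu'\colon J'\to A_h$ of $\nu$. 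The corollary then yields that the canonical map $\T^{A_h}\otimes\Q\to\End_{\Q(\zeta_r)}(A_h)\otimes\Q$ is surjective, where $\T^{A_h}=\{T\in\T_{K';\Q(\zeta_r)}:\rho_{J'}(T)\text{ factors through }A_h\}$.

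It remains only to note that $\chi^{-1}(u)$ lies in the target of this map. Indeed $\chi^{-1}(u)\in K_f$ (this was already used in defining $\tilde{\eta}_\gamma$), and under the isomorphism $K_f\cong\End_\Q(A_h)\otimes\Q$ — which comes from the isogeny between $A_f$ and $A_h$ — it becomes an endomorphism of $A_h$ defined over $\Q$, hence a fortiori over $\Q(\zeta_r)$, so $\chi^{-1}(u)\in\End_{\Q(\zeta_r)}(A_h)\otimes\Q$. Choosing any preimage of $\chi^{-1}(u)$ under the surjection above then produces $\lambda_u\in\T^{A_h}\otimes\Q\subseteq\T_{K';\Q(\zeta_r)}\otimes\Q$ with $\rho_{J'}(\lambda_u)$ factoring through $A_h$ and inducing $\chi^{-1}(u)$ on $A_h$, which is the claim.

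Equivalently — and this is essentially the proof of part (2) of the Corollary unwound — one can argue straight from Theorem \ref{thm2}: since $A_h$ is a quotient of the Jacobian $J'$, there is a section $\iota\colon A_h\to J'$ of $\nu'$ in the isogeny category over $\Q(\zeta_r)$, i.e. $\nu'\circ\iota=\id_{A_h}$; set $\psi=\iota\circ\chi^{-1}(u)\circ\nu'\in\End_{\Q(\zeta_r)}(J')\otimes\Q$ and let $\lambda_u=\rho_J^{-1}(\psi)$ via the isomorphism $\rho_J\colon\T_{K';\Q(\zeta_r)}\otimes\Q\xrightarrow{\cong}\End_{\Q(\zeta_r)}(J')\otimes\Q$ of Theorem \ref{thm2}; then $\rho_{J'}(\lambda_u)=\psi$ factors through $\nu'$ and $\nu'\circ\rho_{J'}(\lambda_u)=\chi^{-1}(u)\circ\nu'$. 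I do not expect a real obstacle in either route: the substance is all in Theorem \ref{thm2}, and only bookkeeping needs attention — that $K'=K'_{\Q(\zeta_r)}$ so that the correct Hecke module is in play, that $A_h$ is genuinely a quotient of $J_{\Q(\zeta_r)}$, and that ``factors through $A_h$ and induces $\chi^{-1}(u)$'' is matched with the output of the corollary. If one wanted $\lambda_u$ written explicitly as a finite $\Q$-linear combination of double cosets, in the spirit of making Theorem \ref{thm2} explicit, one would instead expand $\chi^{-1}(u)\in K_f$ as a $\Q$-polynomial in the eigenvalues $a_n$ (for $(n,m)=1$), together with the idempotent cutting out the $h$-isotypic part; the corresponding Hecke operators $T_n$ and diamond operators $\langle d\rangle$ are realised on $J'$ by double cosets $T(g)$ with $\det g\in\Q_{>0}$ — for instance $g=\left(\begin{smallmatrix}n&0\\0&1\end{smallmatrix}\right)$ embedded diagonally in $G$ — which are defined over $\Q(\zeta_r)$ by Lemma \ref{lem Tg defined over F}. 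Spelling this out in detail is, however, not needed for the statement as given.
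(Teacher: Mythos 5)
Your proposal is correct and follows exactly the paper's route: the paper's proof is the one-line observation that the lemma is a direct consequence of part (2) of the Corollary, and your argument is just that application spelled out, with the hypothesis checks (base field of $M_{K'}$, $A_h$ as a quotient of $J'$ over $\Q(\zeta_r)$, and $\chi^{-1}(u)\in\End_{\Q(\zeta_r)}(A_h)\otimes\Q$) made explicit. The extra unwinding via Theorem \ref{thm2} is fine but not needed.
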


\begin{proof}
This is a direct consequence of Corollary (2).
\end{proof}

We now define
\begin{equation*}
X_\gamma = \sum_{u=0}^{r-1} \lambda_u \cdot T \begin{pmatrix} 1 & u/r \\ 0 & 1 \end{pmatrix} \in \T_{K';\Q(\zeta_r)} \otimes \Q.
\end{equation*}

\begin{pro}
The endomorphism $\rho_{J'}(X_\gamma)$ factors through $A_h$ and induces the endomorphism $\eta_\gamma$ on $A_h$.
\end{pro}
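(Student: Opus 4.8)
The plan is to unwind the definition of $X_\gamma$ and compare it term-by-term with Ribet's $\tilde\eta_\gamma$, using Lemmas \ref{lem alpha} and \ref{lem lambda} as the two crucial translations. First I would apply $\rho_{J'}$ to $X_\gamma$: by linearity and multiplicativity of $\rho_{J'}$ on the bimodule structure, $\rho_{J'}(X_\gamma) = \sum_{u=0}^{r-1} \rho_{J'}(\lambda_u) \circ \rho_{J'}\bigl(T\left(\begin{smallmatrix} 1 & u/r \\ 0 & 1 \end{smallmatrix}\right)\bigr)$. By Lemma \ref{lem alpha}, the second factor is $\alpha_{u/r}$, and by Lemma \ref{lem lambda}, $\rho_{J'}(\lambda_u)$ is an endomorphism of $J'$ that factors through $A_h$ and induces $\chi^{-1}(u)$ there; in other words $\rho_{J'}(\lambda_u) = \iota \circ \chi^{-1}(u) \circ \nu$ up to the identification $J' \cong J_{\Q(\zeta_r)}$, where $\iota : A_h \to J$ is a section of $\nu$ in the isogeny category (or more precisely $\rho_{J'}(\lambda_u)$ agrees with $\nu$ postcomposed with $\chi^{-1}(u)$ after a further composition — the point is only that on $A_h$ it acts as $\chi^{-1}(u)$). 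Substituting, $\rho_{J'}(X_\gamma) = \sum_{u \in \Z/r\Z} \bigl(\text{something inducing } \chi^{-1}(u) \text{ on } A_h\bigr) \circ \alpha_{u/r}$, which is exactly Ribet's $\tilde\eta_\gamma = \sum_{u \in \Z/r\Z} \chi^{-1}(u) \circ \nu \circ \alpha_{u/r}$.

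The remaining work is bookkeeping about where things factor. I would argue as follows: each summand $\rho_{J'}(\lambda_u) \circ \alpha_{u/r}$ factors through $A_h$ because $\rho_{J'}(\lambda_u)$ does (its image lands in $A_h \subset J$ in the isogeny category, or it literally factors as a composite through $A_h$), hence the sum $\rho_{J'}(X_\gamma)$ factors through $A_h$. To see that the induced endomorphism of $A_h$ is $\eta_\gamma$, I would use that $\nu : J \to A_h$ is surjective with a quasi-section, so an endomorphism of $A_h$ is determined by the composite $J \to A_h \to A_h$; but that composite is precisely $\tilde\eta_\gamma$ by the computation above, and by Ribet's construction $\tilde\eta_\gamma$ is exactly $\nu$ followed by $\eta_\gamma$. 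Thus $\rho_{J'}(X_\gamma)$ and $\eta_\gamma$ agree on $A_h$.

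The main obstacle I anticipate is keeping straight the precise relationship between $\rho_{J'}(\lambda_u)$, which a priori is only known to be \emph{some} correspondence-induced endomorphism of $J'$ factoring through $A_h$ and inducing $\chi^{-1}(u)$, and the specific expression $\nu$ followed by multiplication by $\chi^{-1}(u)$ that appears in Ribet's sum. Lemma \ref{lem lambda} gives existence but pins down $\rho_{J'}(\lambda_u)$ only up to its effect on $A_h$ and the factoring condition; however, since $\tilde\eta_\gamma$ is built from $\nu$ and $A_h$-level data, and since we only need to reconstruct $\eta_\gamma$ as an endomorphism of $A_h$, the ambiguity is harmless: any two homomorphisms $J \to A_h$ that factor through $A_h$ (via $\iota$) and induce the same endomorphism of $A_h$ are equal after postcomposing by nothing — one must check that $\tilde\eta_\gamma$ only depends on the $\chi^{-1}(u)$ through their action on $A_h$, which is clear from Ribet's formula. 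The one genuine subtlety is that $\alpha_{u/r}$ maps $J$ to $J$ rather than directly to $A_h$, so the factoring through $A_h$ of the full sum relies on Ribet's statement that $\tilde\eta_\gamma$ factors through $\nu$; I would cite that and note that our expression reproduces $\tilde\eta_\gamma$ exactly, so it inherits the same factoring.

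\begin{proof}
By construction $X_\gamma = \sum_{u=0}^{r-1} \lambda_u \cdot T\left(\begin{smallmatrix} 1 & u/r \\ 0 & 1 \end{smallmatrix}\right)$, and since $\rho_{J'}$ is a morphism of bimodules we get
\begin{equation*}
\rho_{J'}(X_\gamma) = \sum_{u=0}^{r-1} \rho_{J'}(\lambda_u) \circ \rho_{J'}\!\left(T\left(\begin{smallmatrix} 1 & u/r \\ 0 & 1 \end{smallmatrix}\right)\right).
\end{equation*}
By Lemma \ref{lem alpha} we have $\rho_{J'}\!\left(T\left(\begin{smallmatrix} 1 & u/r \\ 0 & 1 \end{smallmatrix}\right)\right) = \alpha_{u/r}$, using the identification $J' \cong J_{\Q(\zeta_r)}$. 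By Lemma \ref{lem lambda}, $\rho_{J'}(\lambda_u)$ factors through $A_h$ and induces the endomorphism $\chi^{-1}(u)$ of $A_h$; writing $\nu : J \to A_h$ for the canonical projection, this means $\rho_{J'}(\lambda_u) = \chi^{-1}(u) \circ \nu$ as elements of $\Hom(J,A_h) \otimes \Q$, followed by the inclusion $A_h \to J$ in the isogeny category. Substituting, we obtain
\begin{equation*}
\rho_{J'}(X_\gamma) = \sum_{u \in \Z/r\Z} \chi^{-1}(u) \circ \nu \circ \alpha_{u/r} = \tilde{\eta}_\gamma.
\end{equation*}
By Ribet's construction, $\tilde{\eta}_\gamma$ factors through $\nu$ and the induced endomorphism of $A_h$ is $\eta_\gamma$. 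Hence $\rho_{J'}(X_\gamma)$ factors through $A_h$ and induces $\eta_\gamma$ on $A_h$.
\end{proof}
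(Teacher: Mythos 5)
Your proof is correct and takes essentially the same route as the paper, whose entire proof of this proposition is the one-line observation that it follows from Lemmas \ref{lem alpha} and \ref{lem lambda}; you simply spell out the term-by-term comparison of $\rho_{J'}(X_\gamma)$ with Ribet's $\tilde{\eta}_\gamma$. Your own caveat is the right one: Lemma \ref{lem lambda} pins down $\rho_{J'}(\lambda_u)$ only through its effect on $A_h$, not as the literal composite $\chi^{-1}(u)\circ\nu$ followed by a section, but since the claim concerns only the induced endomorphism of $A_h$ (i.e.\ the identity $\nu\circ\rho_{J'}(X_\gamma)=\eta_\gamma\circ\nu$ up to isogeny), this ambiguity is indeed harmless.
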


\begin{proof}
This follows from Lemmas \ref{lem alpha} and \ref{lem lambda}.
\end{proof}

\bibliographystyle{plain}
\bibliography{references}

\end{document}